\newtheorem{theorem}{Theorem}[section]
\newtheorem{corollary}{Corollary}[section]
\newtheorem{definition}{Definition}[section]
\newtheorem{remark}{Remark}[section]
\newenvironment{proof}[1][Proof]{\textbf{#1.} }{\ \rule{0.5em}{0.5em}}
\begin{document}

\title{\bigskip A generalization of the Minkowski distance\\
and a new definition of the ellipse}
\author{Harun Bar\i \c{s} \c{C}olako\u{g}lu\bigskip \\
%EndAName
{\small hbcolakoglu@akdeniz.edu.tr}\medskip \\
{\small Akdeniz University, Vocational School of Technical Sciences,}\\
{\small Department of Computer Technologies, 07070, Konyaalt\i , Antalya, T\"{U}RK\.{I}YE.}}
\date{}
\maketitle

\begin{abstract}
\noindent In this paper, we generalize the Minkowski distance by defining a
new distance function in \textit{n}-dimensional space, and we show that this
function determines also a metric family as the Minkowski distance. Then, we
consider three special cases of this family, which generalize the taxicab,
Euclidean and maximum metrics respectively, and finally we determine circles
of them with their some properties in the real plane. While we determine
some properties of circles of the generalized Minkowski distance, we also
discover a new definition for the ellipse.
\end{abstract}
\title{\noindent \textbf{Keywords: }Minkowski distance, $l_{p}$-metric,
taxicab distance, Manhattan distance, Euclidean distance, maximum distance,
Chebyshev distance, circle, ellipse, conjugate diameter, eccentrix.\medskip \newline
\textbf{2010 MSC: }51K05, 51K99, 51N20.\medskip }

\section{Introduction}

\noindent Beyond the mathematics; distances, especially the well-known
Minkowski distance (also known as $l_{p}$-metric) with its special cases
taxicab (also known as $l_{1}$ or Manhattan), Euclidean (also known as $%
l_{2} $) and maximum (also known as $l_{\infty }$ or Chebyshev) distances,
are very important keys for many application areas such as data mining,
machine learning, pattern recognition and spatial analysis 
(see \cite{Amorim}, \cite{Cha}, \cite{Doherty}, \cite{Li}, \cite{Lu}, \cite{Shirkhorshidi}, 
\cite{Singh} and \cite{Vezzetti} for some of related studies).

\noindent Here, we generalize the Minkowski distance for \textit{n}%
-dimensional case, and we show that this generalization gives a new metric
family for $p\geq 1$ as the Minkowski distance itself. Then, we give some
basic distance properties of this generalized Minkowski distance, and we
consider the new metric family for cases $p=1$, $p=2$ and $p\rightarrow
\infty $, which we call the generalized taxicab, Euclidean and maximum
metrics respectively. Finally, we determine circles of them in the real
plane. We see that circles of the generalized taxicab and maximum metrics
are parallelograms and circles of the generalized Euclidean metric are
ellipses. While we determine some properties of circles of the generalized
Euclidean distance, we also discover a new definition for the ellipse, which
can be referenced by "two-eccentrices" definition, as the well-known
"two-foci" and "focus-directrix" definitions.

\noindent Throughout this paper, symmetry about a line is used in the
Euclidean sense and angle measurement is in\ Euclidean radian. Also the
terms square, rectangle, rhombus, parallelogram and ellipse are used in the
Euclidean sense, and center of them stands for their center of symmetry.

\section{A generalization of the Minkowski distance\protect\vspace{-0.09in}}

\noindent We generalize the Minkowski distance using linearly independent $n$
unit vectors $v_{1},...,v_{n}$ and $n$ positive real numbers $\lambda
_{1},...,\lambda _{n}$, as in the following definition. For the sake of
shortness we use notation $d_{p(v_{1},...,v_{n})}$, instead of for example $%
d_{p(v_{1},...,v_{n})}^{(\lambda _{1},...,\lambda _{n})}$, for the new
distance family, supposing $\lambda _{i}$ weights are initially determined
and fixed, and we call it $(v_{1},...,v_{n})$-\emph{Minkowski distance}.

\begin{definition}
Let $X=(x_{1},...,x_{n})$ and $Y=(y_{1},...,y_{n})$ be two points in $%
\mathbb{R}^{n}$. For linearly independent $n$ unit vectors $v_{1},...,v_{n}$
where $v_{i}=(v_{i1},...,v_{in})$, and positive real numbers $p,\lambda
_{1},...,\lambda _{n}$, the function $d_{p(v_{1},...,v_{n})}:\mathbb{R}%
^{n}\times \mathbb{R}^{n}\rightarrow \lbrack 0,\infty )$ defined by\vspace{%
-0.12in}%
\begin{equation}
d_{p(v_{1},...,v_{n})}(X,Y)=\left( \sum_{i=1}^{n}\left( \lambda
_{i}\left\vert v_{i1}(x_{1}-y_{1})+...+v_{in}(x_{n}-y_{n})\right\vert
\right) ^{p}\right) ^{1/p}\vspace{-0.06in}
\end{equation}%
is called $\boldsymbol{(}v_{1},...,v_{n}\boldsymbol{)}$\textbf{-Minkowski }%
(or\textbf{\ }$l_{p(v_{1},...,v_{n})}$)\textbf{\ distance\ function} in $%
\mathbb{R}^{n}$, and real number $d_{p(v_{1},...,v_{n})}(X,Y)$ is called $%
\boldsymbol{(}v_{1},...,v_{n}\boldsymbol{)}$\textbf{-Minkowski distance}%
\textit{\ }between points $X$ and $Y$. In addition, if $p=1$, $p=2$ and $%
p\rightarrow \infty $, then $d_{p(v_{1},...,v_{n})}(X,Y)$ is called $%
\boldsymbol{(}v_{1},...,v_{n}\boldsymbol{)}$\textbf{-taxicab distance}%
\textit{, }$\boldsymbol{(}v_{1},...,v_{n}\boldsymbol{)}$\textbf{-Euclidean
distance }\textit{and }$\boldsymbol{(}v_{1},...,v_{n}\boldsymbol{)}$\textbf{%
-maximum distance }between points $X$ and $Y$ respectively, and we denote
them by $d_{T(v_{1},...,v_{n})}(X,Y)$, $d_{E(v_{1},...,v_{n})}(X,Y)$ and $%
d_{M(v_{1},...,v_{n})}(X,Y)$ respectively.\vspace{-0.07in}
\end{definition}

\noindent Here, since$\ \sigma \hspace{-0.03in}\leq \hspace{-0.03in}%
d_{p(v_{1},...,v_{n})}(X,Y)\hspace{-0.03in}\leq \hspace{-0.03in}\sigma
n^{1/p}$ where $\sigma \hspace{-0.03in}=\hspace{-0.03in}\max\limits_{i\in
\{1,...,n\}}\hspace{-0.03in}\left\{ \lambda _{i}\left\vert v_{i1}(x_{1}%
\hspace{-0.03in}-\hspace{-0.03in}y_{1})\hspace{-0.03in}+\hspace{-0.03in}...%
\hspace{-0.03in}+\hspace{-0.03in}v_{in}(x_{n}\hspace{-0.03in}-\hspace{-0.03in%
}y_{n})\right\vert \right\} $, we have that $\lim\limits_{p\rightarrow
\infty }d_{p(v_{1},...,v_{n})}(X,Y)\hspace{-0.03in}=\hspace{-0.08in}%
\max\limits_{i\in \{1,...,n\}}\hspace{-0.03in}\left\{ \lambda _{i}\left\vert
v_{i1}(x_{1}\hspace{-0.03in}-\hspace{-0.03in}y_{1})\hspace{-0.03in}+\hspace{%
-0.03in}...\hspace{-0.03in}+\hspace{-0.03in}v_{in}(x_{n}\hspace{-0.03in}-%
\hspace{-0.03in}y_{n})\right\vert \right\} $ and so\vspace{-0.07in}%
\begin{equation}
d_{M(v_{1},...,v_{n})}(X,Y)=\max\limits_{i\in \{1,...,n\}}\hspace{-0.03in}%
\left\{ \lambda _{i}\left\vert v_{i1}(x_{1}\hspace{-0.03in}-\hspace{-0.03in}%
y_{1})\hspace{-0.03in}+\hspace{-0.03in}...\hspace{-0.03in}+\hspace{-0.03in}%
v_{in}(x_{n}\hspace{-0.03in}-\hspace{-0.03in}y_{n})\right\vert \right\} .
\end{equation}

\begin{remark}
In $n$-dimensional Cartesian coordinate space, let $\Psi _{P}^{v_{i}}$
denote hyperplane through point $P$ and perpendicular to the vector $v_{i}$
for $i\in \{1,...,n\}$. Since Euclidean distance between the point $Y$ and
hyperplane $\Psi _{X}^{v_{i}}$ (or the point $X$ and hyperplane $\Psi
_{Y}^{v_{i}}$) is 
\begin{equation}
d_{E}(Y,\Psi _{X}^{v_{i}})=\left\vert
v_{i1}(x_{1}-y_{1})+...+v_{in}(x_{n}-y_{n})\right\vert ,
\end{equation}
$(v_{1},...,v_{n})$-Minkowski distance between the points $X$ and $Y$ is
\vspace{-0.08in}
\begin{equation}
d_{p(v_{1},...,v_{n})}(X,Y)=\left( \sum_{i=1}^{n}\text{\thinspace }(\lambda
_{i}d_{E}(Y,\Psi _{X}^{v_{i}}))^{p}\right) ^{1/p}
\end{equation}
which is the geometric interpretation of $(v_{1},...,v_{n})$-Minkowski distance.
In other words, $(v_{1},...,v_{n})$-Minkowski distance between points $X$
and $Y$, is determined by the sum of weighted Euclidean distances from one
of the points to hyperplanes through the other point, each of which is
perpendicular to one of the vectors $v_{1},...,v_{n}$. Clearly, for $\lambda
_{i}=1$ and unit vectors $v_{1},...,v_{n}$ where $v_{ii}=1$ and $v_{ij}=0$
for $i\neq j$, $i,j\in \{1,...,n\}$, we have
\begin{equation}
d_{p(v_{1},...,v_{n})}(X,Y)=d_{p}(X,Y)=\left( \sum_{i=1}^{n}\left\vert
x_{i}-y_{i}\right\vert ^{p}\right) ^{1/p}
\end{equation}
which is the well-known \textbf{Minkowski }(or\textbf{\ }$l_{p}$) \textbf{distance} 
between the points $X$ and $Y$, that gives the well-known 
\textbf{taxicab}, \textbf{Euclidean} and \textbf{maximum} distances denoted by 
$d_{T}(X,Y)$, $d_{E}(X,Y)$ and $d_{M}(X,Y)$, for $p=1$, $p=2$ and $%
p\rightarrow \infty $ respectively (see \cite[pp. 94, 301]{Deza}; see also 
\cite{Krause} and \cite{Salihova}).
\end{remark}

\noindent The following proposition shows that
$(v_{1},...,v_{n})$-Minkowski distance function for $p\geq 1$ satisfies the metric properties
in $\mathbb{R}^{n}$:

\begin{theorem}
For $p\geq 1$, $(v_{1},...,v_{n})$-Minkowski distance function determines
metric in $\mathbb{R}^{n}$.
\end{theorem}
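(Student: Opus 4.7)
The plan is to verify the four metric axioms (non-negativity, identity of indiscernibles, symmetry, triangle inequality) for $d_{p(v_1,\ldots,v_n)}$. Non-negativity is immediate from the definition since each summand is a non-negative real number raised to power $p$, and we take a positive $p$-th root. Symmetry is also immediate: replacing $X-Y$ by $Y-X$ only changes the sign inside each absolute value $|v_{i1}(x_1-y_1)+\cdots+v_{in}(x_n-y_n)|$, leaving the expression unchanged.

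For the identity of indiscernibles, the direction $X=Y\Rightarrow d_{p(v_1,\ldots,v_n)}(X,Y)=0$ is clear. For the converse, I would note that $d_{p(v_1,\ldots,v_n)}(X,Y)=0$ forces every term $\lambda_i|v_i\cdot(X-Y)|$ to vanish; since $\lambda_i>0$, this yields $v_i\cdot(X-Y)=0$ for all $i\in\{1,\ldots,n\}$. Here the linear independence hypothesis becomes essential: the matrix $V$ whose rows are $v_1,\ldots,v_n$ is invertible, so $V(X-Y)=0$ implies $X-Y=0$, i.e.\ $X=Y$.

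The triangle inequality is the only step that requires real work, but it decomposes cleanly into two classical facts. For points $X,Y,Z\in\mathbb{R}^n$, I set
\begin{equation*}
a_i=\lambda_i|v_i\cdot(X-Y)|,\qquad b_i=\lambda_i|v_i\cdot(Y-Z)|,\qquad c_i=\lambda_i|v_i\cdot(X-Z)|.
\end{equation*}
First, writing $X-Z=(X-Y)+(Y-Z)$ and using linearity of $v_i\cdot(\,\cdot\,)$ together with the ordinary triangle inequality for $|\cdot|$ in $\mathbb{R}$, I obtain $c_i\leq a_i+b_i$ for each $i$. Second, since $p\geq 1$, the classical Minkowski inequality in $\mathbb{R}^n$ gives
\begin{equation*}
\Bigl(\sum_{i=1}^n c_i^{\,p}\Bigr)^{1/p}\leq\Bigl(\sum_{i=1}^n(a_i+b_i)^{p}\Bigr)^{1/p}\leq\Bigl(\sum_{i=1}^n a_i^{\,p}\Bigr)^{1/p}+\Bigl(\sum_{i=1}^n b_i^{\,p}\Bigr)^{1/p},
\end{equation*}
which is exactly $d_{p(v_1,\ldots,v_n)}(X,Z)\leq d_{p(v_1,\ldots,v_n)}(X,Y)+d_{p(v_1,\ldots,v_n)}(Y,Z)$.

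The main obstacle is the triangle inequality, and in particular the Minkowski step, since the inner triangle inequality for $|\cdot|$ is elementary. However this is routine once we recognize that the quantities $a_i,b_i,c_i$ form $n$-tuples to which the standard $\ell^p$-Minkowski inequality applies directly; no new analytic content beyond the classical result is needed. I would therefore cite Minkowski's inequality rather than reprove it.
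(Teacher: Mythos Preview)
Your proof is correct and follows essentially the same route as the paper: the identity of indiscernibles is handled via the invertibility of the matrix with rows $v_1,\ldots,v_n$, and the triangle inequality via the classical Minkowski inequality for $\ell^p$. Your presentation is slightly more structured in that you isolate the scalar triangle inequality $c_i\le a_i+b_i$ before applying Minkowski, whereas the paper absorbs both steps into a single chain, but the content is the same.
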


\begin{proof}
Let $X=(x_{1},...,x_{n})$, $Y=(y_{1},...,y_{n})$ and $Z=(z_{1},...,z_{n})$
be three points in $\mathbb{R}^{n}$.

\noindent (M1) Clearly, if $X=Y$, then $d_{p(v_{1},...,v_{n})}(X,Y)=0$.
Conversely, if $d_{p(v_{1},...,v_{n})}(X,Y)=0$, then we get $A\mathbf{x}=0$
where 
\begin{equation*}
A=\left[ 
\begin{array}{ccc}
v_{11} & \ldots & v_{1n} \\ 
\vdots & \ddots & \vdots \\ 
v_{n1} & \ldots & v_{nn}%
\end{array}%
\right] \text{ \ and \ }\mathbf{x}=\left[ 
\begin{array}{c}
x_{1}-y_{1} \\ 
\vdots \\ 
x_{n}-y_{n}%
\end{array}%
\right] .
\end{equation*}%
Since $v_{1},...,v_{n}$ are linearly independent, we have $\left\vert
A\right\vert \neq 0$. Therefore the homogeneous system $A\mathbf{x}=0$ has
only trivial solution. Thus, we have $x_{i}-y_{i}=0$, and so $X=Y$.\smallskip

\noindent (M2) It is clear that $d_{p(v_{1},...,v_{n})}(X,Y)=d_{p(v_{1},...,v_{n})}(Y,X)$.\smallskip

\noindent (M3) The triangle inequality can be proven using the
Minkowski inequality for $p\geq 1$ (see \cite[p. 25]{Beck}) as follows:
\newline
$d_{p(v_{1},...,v_{n})}(X,Y)=\left( \sum\limits_{i=1}^{n}\left( \lambda
_{i}\left\vert v_{i1}(x_{1}\hspace{-0.03in}-\hspace{-0.03in}%
y_{1})+...+v_{in}(x_{n}\hspace{-0.03in}-\hspace{-0.03in}y_{n})\right\vert
\right) ^{p}\right) ^{1/p}$\newline
$=\left( \sum\limits_{i=1}^{n}\left\vert \lambda _{i}(v_{i1}(x_{1}\hspace{%
-0.03in}-\hspace{-0.03in}z_{1}\hspace{-0.03in}+\hspace{-0.03in}z_{1}\hspace{%
-0.03in}-\hspace{-0.03in}y_{1})+...+v_{in}(x_{n}\hspace{-0.03in}-\hspace{%
-0.03in}z_{n}\hspace{-0.03in}+\hspace{-0.03in}z_{n}\hspace{-0.03in}-\hspace{%
-0.03in}y_{n}))\right\vert ^{p}\right) ^{1/p}$\newline
$=\left( \sum\limits_{i=1}^{n}\left\vert \lambda _{i}(v_{i1}(x_{1}\hspace{%
-0.03in}-\hspace{-0.03in}z_{1})+...+v_{in}(x_{n}\hspace{-0.03in}-\hspace{%
-0.03in}z_{n}))\hspace{-0.03in}+\hspace{-0.03in}\lambda _{i}(v_{i1}(z_{1}%
\hspace{-0.03in}-\hspace{-0.03in}y_{1})+...+v_{in}(z_{n}\hspace{-0.03in}-%
\hspace{-0.03in}y_{n}))\right\vert ^{p}\right) ^{1/p}$\newline
$\leq \left( \sum\limits_{i=1}^{n}\left\vert \lambda _{i}(v_{i1}(x_{1}%
\hspace{-0.03in}-\hspace{-0.03in}z_{1})+...+v_{in}(x_{n}\hspace{-0.03in}-%
\hspace{-0.03in}z_{n}))\right\vert ^{p}\right) ^{1/p}\hspace{-0.03in}+%
\hspace{-0.03in}\left( \sum\limits_{i=1}^{n}\left\vert \lambda
_{i}(v_{i1}(z_{1}\hspace{-0.03in}-\hspace{-0.03in}y_{1})+...+v_{in}(z_{n}%
\hspace{-0.03in}-\hspace{-0.03in}y_{n}))\right\vert ^{p}\right) ^{1/p}$%
\newline
$=\left( \sum\limits_{i=1}^{n}\left( \lambda _{i}\left\vert v_{i1}(x_{1}%
\hspace{-0.03in}-\hspace{-0.03in}z_{1})+...+v_{in}(x_{n}\hspace{-0.03in}-%
\hspace{-0.03in}z_{n})\right\vert \right) ^{p}\right) ^{1/p}\hspace{-0.03in}+%
\hspace{-0.03in}\left( \sum\limits_{i=1}^{n}\left( \lambda _{i}\left\vert
v_{i1}(z_{1}\hspace{-0.03in}-\hspace{-0.03in}y_{1})+...+v_{in}(z_{n}\hspace{%
-0.03in}-\hspace{-0.03in}y_{n})\right\vert \right) ^{p}\right) ^{1/p}$%
\newline
$=d_{p(v_{1},...,v_{n})}(X,Z)+d_{p(v_{1},...,v_{n})}(Z,Y).$\bigskip
\end{proof}

\noindent Since the Minkowski inequality does not hold for $0<p<1$ 
(see \cite[pp. 26-27]{Beck}), $(v_{1},...,v_{n})$-Minkowski distance function family
does not hold the triangle inequality for $0<p<1$. So, for $0<p<1$, it does
not determine a metric in $\mathbb{R}^{n}$ while it determines a distance.
We denote by $\mathbb{R}_{p(v_{1},v_{2})}^{2}$ the real plane $\mathbb{R}^{2}$ equipped with the 
$(v_{1},v_{2})$-Minkowski metric.\smallskip

\noindent The following theorem shows that $(v_{1},...,v_{n})$-Minkowski
distance and Euclidean distance between two points on any given line $l$ are
directly proportional:

\begin{theorem}
Given two points $X$ and $Y$ on a line $l$ with the direction vector 
$u=(u_{1},...,u_{n})$. Then, 
\begin{equation}
d_{p(v_{1},...,v_{n})}(X,Y)=\phi _{p(v_{1},...,v_{n})}(l)d_{E}(X,Y)
\end{equation}%
where $\phi _{p(v_{1},...,v_{n})}(l)=\frac{\left(
\sum\limits_{i=1}^{n}\left( \lambda _{i}\left\vert
v_{i1}u_{1}+...+v_{in}u_{n}\right\vert \right) ^{p}\right) ^{1/p}}{\sqrt{u_{1}^{2}+...+u_{n}^{2}}}.$
\end{theorem}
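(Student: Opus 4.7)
The plan is a direct computation based on parametrising the segment from $X$ to $Y$ by the direction vector $u$. Since $X$ and $Y$ lie on the line $l$ with direction $u$, there exists $t \in \mathbb{R}$ such that $y_k - x_k = t u_k$ for every $k \in \{1,\ldots,n\}$. Substituting this into definition (1), each inner linear form becomes
$$v_{i1}(x_1 - y_1) + \cdots + v_{in}(x_n - y_n) = -t\bigl(v_{i1} u_1 + \cdots + v_{in} u_n\bigr),$$
whose absolute value is $|t| \cdot |v_{i1} u_1 + \cdots + v_{in} u_n|$.

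Next I would factor $|t|^p$ out of every summand of $d_{p(v_1,\ldots,v_n)}(X,Y)^p$ and then extract $|t|$ from the outer $p$-th root to obtain
$$d_{p(v_1,\ldots,v_n)}(X,Y) = |t| \left(\sum_{i=1}^{n} \bigl(\lambda_i |v_{i1} u_1 + \cdots + v_{in} u_n|\bigr)^{p}\right)^{1/p}.$$
At the same time, $d_E(X,Y) = |t| \sqrt{u_1^2 + \cdots + u_n^2}$. Taking the ratio of the two expressions, the factor $|t|$ cancels and the formula for $\phi_{p(v_1,\ldots,v_n)}(l)$ appears exactly as stated, proving (6). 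The degenerate case $X = Y$ corresponds to $t = 0$, where both sides of the identity equal $0$ and the equality is trivial.

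The only nontrivial point, and the thing that should be checked for rigour, is that $\phi_{p(v_1,\ldots,v_n)}(l)$ is genuinely a function of the line $l$ rather than of the particular direction vector chosen to represent it. This follows from the observation that both the numerator and the denominator in the definition of $\phi$ are positively $1$-homogeneous in $u$: replacing $u$ by $cu$ for any nonzero $c \in \mathbb{R}$ multiplies the numerator by $|c|$ (via the outer absolute values combined with the $1/p$ exponent) and the denominator by $|c|$ (via the square root of $c^2$), so the ratio is invariant. There is no real obstacle beyond this: after the parametrisation, the entire argument is a one-line manipulation exploiting the absolute homogeneity of each $\lambda_i |v_i \cdot (\,\cdot\,)|$ and of the Euclidean norm along the line.
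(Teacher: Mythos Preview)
Your proof is correct and follows essentially the same approach as the paper: both parametrise $X-Y$ as a scalar multiple of the direction vector $u$, factor the scalar out of each distance, and compare. You add two small remarks (the trivial case $X=Y$ and the homogeneity check that $\phi$ depends only on $l$) that the paper omits, but the core argument is identical.
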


\begin{proof}
For any two points $X=(x_{1},...,x_{n})$ and $Y=(y_{1},...,y_{n})$ there is 
$k\in \mathbb{R}$ such that $(x_{1}-y_{1},...,x_{n}-y_{n})=k(u_{1},...,u_{n})$. Then, we
have $d_{E}(X,Y)=\left\vert k\right\vert \sqrt{u_{1}^{2}+...+u_{n}^{2}}$ and
\newline
$d_{p(v_{1},...,v_{n})}(X,Y)=\left\vert k\right\vert \left(
\sum\limits_{i=1}^{n}\left( \lambda _{i}\left\vert
v_{i1}u_{1}+...+v_{in}u_{n}\right\vert \right) ^{p}\right) ^{1/p}$ which
complete the proof.\medskip
\end{proof}

\noindent Now, the following corollaries are trivial:

\begin{corollary}
If $W,$ $X$ and $Y,Z$ are pair of distinct points such that the lines
determined by them are the same or parallel, then 
\begin{equation}
d_{p(v_{1},...,v_{n})}(W,X)/d_{p(v_{1},...,v_{n})}(Y,Z)=d_{E}(W,X)/d_{E}(Y,Z).
\end{equation}
\end{corollary}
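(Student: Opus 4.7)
The plan is to reduce this corollary to Theorem 2.2 by exploiting the fact that the proportionality constant $\phi_{p(v_{1},...,v_{n})}(l)$ depends only on the direction of the line $l$, not on which particular segment of $l$ one looks at. Since the lines through $W,X$ and through $Y,Z$ are either identical or parallel, they share a common direction vector $u=(u_{1},...,u_{n})$, and hence produce the very same scalar $\phi_{p(v_{1},...,v_{n})}$.

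Concretely, I would first apply Theorem 2.2 to the segment $WX$, obtaining $d_{p(v_{1},...,v_{n})}(W,X)=\phi_{p(v_{1},...,v_{n})}(l_{WX})\,d_{E}(W,X)$, and then apply it again to $YZ$ to obtain $d_{p(v_{1},...,v_{n})}(Y,Z)=\phi_{p(v_{1},...,v_{n})}(l_{YZ})\,d_{E}(Y,Z)$. The hypothesis that $l_{WX}$ and $l_{YZ}$ are the same or parallel means that one can be chosen as a direction vector for the other, so $\phi_{p(v_{1},...,v_{n})}(l_{WX})=\phi_{p(v_{1},...,v_{n})}(l_{YZ})$ by the explicit formula in Theorem 2.2. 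Dividing the two identities cancels this common factor and yields the desired equality.

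The only point that needs a moment's care is the division step, which requires $d_{p(v_{1},...,v_{n})}(Y,Z)\neq 0$ and $d_{E}(Y,Z)\neq 0$; both are guaranteed by the assumption that $Y,Z$ are distinct together with the metric property (M1) proven in Theorem 2.1. There is no real obstacle here: the whole argument is a one-line consequence of Theorem 2.2, which is precisely why the author labels the statement trivial.
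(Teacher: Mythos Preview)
Your proposal is correct and is exactly the argument the paper has in mind: the author states that this corollary is ``trivial'' immediately after Theorem~2.2 and gives no explicit proof, precisely because the equality follows at once by applying Theorem~2.2 to each pair and cancelling the common factor $\phi_{p(v_{1},\dots,v_{n})}(l)$. Your remark that the division is justified since $Y\neq Z$ (hence both distances are nonzero by (M1)) is a nice touch, though the paper does not bother to mention it.
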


\begin{corollary}
Circles and spheres of $(v_{1},...,v_{n})$-Minkowski distance are symmetric
about their center.
\end{corollary}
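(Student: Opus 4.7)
The plan is to reduce the assertion directly to Corollary~2.1. Let $C$ denote the center and $r$ the radius of a given $(v_{1},\ldots,v_{n})$-Minkowski circle (or sphere). If $r=0$ the claim is vacuous, so I would assume $r>0$ and pick any point $X$ with $d_{p(v_{1},\ldots,v_{n})}(C,X)=r$; in particular $X\neq C$. Introduce its Euclidean reflection through $C$, namely $X'=2C-X$, and aim to show that $d_{p(v_{1},\ldots,v_{n})}(C,X')=r$ as well, since this is exactly what symmetry about $C$ requires.

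The key observation is that $X$, $C$, $X'$ are Euclidean collinear by construction, because $X'-C=-(X-C)$; hence the two distinct pairs $\{C,X\}$ and $\{C,X'\}$ lie on a common line through $C$, so the hypothesis of Corollary~2.1 about coincident or parallel lines is met. Moreover, since reflection through $C$ is a Euclidean isometry fixing $C$, we have $d_{E}(C,X')=d_{E}(C,X)$.

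Having set this up, applying Corollary~2.1 to the two pairs yields
\[
\frac{d_{p(v_{1},\ldots,v_{n})}(C,X')}{d_{p(v_{1},\ldots,v_{n})}(C,X)}=\frac{d_{E}(C,X')}{d_{E}(C,X)}=1,
\]
so $d_{p(v_{1},\ldots,v_{n})}(C,X')=r$ and $X'$ lies on the same circle (or sphere). The argument is essentially immediate from the preceding theorem; there is no real obstacle, only the minor bookkeeping of verifying that the pairs $\{C,X\}$ and $\{C,X'\}$ are collinear so that Corollary~2.1 applies, which is transparent since they share the endpoint $C$.
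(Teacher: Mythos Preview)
Your argument is correct and is exactly the kind of elaboration the paper has in mind: the paper does not spell out a proof but simply declares the corollary ``trivial'' after Theorem~2.2 and Corollary~2.1, and your reduction to Corollary~2.1 (using that $C,X,X'$ are collinear and $d_E(C,X)=d_E(C,X')$) is precisely that triviality made explicit. A minor wording point: the case $r=0$ is not vacuous but trivial (the sphere is $\{C\}$), though this does not affect the argument.
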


\begin{corollary}
Translation by any vector preserves $(v_{1},...,v_{n})$-Minkowski distance.
\end{corollary}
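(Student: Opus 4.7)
The plan is to verify the claim directly from the defining formula in Definition~2.1, using the elementary observation that the $(v_1,\ldots,v_n)$-Minkowski distance depends on $X$ and $Y$ only through the displacement $X-Y$. Concretely, I would fix an arbitrary translation vector $T=(t_1,\ldots,t_n)\in\mathbb{R}^n$ and set $X'=X+T$ and $Y'=Y+T$, so that $X'=(x_1+t_1,\ldots,x_n+t_n)$ and $Y'=(y_1+t_1,\ldots,y_n+t_n)$.

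The key step is the coordinate-wise cancellation $x_i'-y_i' = (x_i+t_i)-(y_i+t_i) = x_i-y_i$ for each $i\in\{1,\ldots,n\}$. Substituting these identities into the formula, every inner quantity $v_{i1}(x_1'-y_1')+\cdots+v_{in}(x_n'-y_n')$ equals $v_{i1}(x_1-y_1)+\cdots+v_{in}(x_n-y_n)$, so the sums under the $p$-th root agree term by term and $d_{p(v_1,\ldots,v_n)}(X',Y')=d_{p(v_1,\ldots,v_n)}(X,Y)$.

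There is no real obstacle here; the statement is an immediate consequence of the definition, which is translation-invariant by construction. As an alternative derivation, one could invoke Corollary~2.1: the segment from $X$ to $Y$ and the segment from $X'$ to $Y'$ lie on parallel (or coincident) lines sharing the same direction vector $u=X-Y$, and they have equal Euclidean length, so the corollary forces equal $(v_1,\ldots,v_n)$-Minkowski length as well. I would present the direct substitution argument rather than this detour, since it is shorter and self-contained.
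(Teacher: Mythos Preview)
Your direct substitution argument is correct and is the cleanest way to see the claim. The paper itself gives no explicit proof: it simply lists this corollary (together with Corollaries~2.1, 2.2, and 2.4) as ``trivial'' immediately after Theorem~2.2, so the intended justification is presumably via that theorem---the proportionality factor $\phi_{p(v_1,\ldots,v_n)}(l)$ depends only on the direction of $l$, and Euclidean distance is translation invariant. Your alternative derivation through Corollary~2.1 is essentially this route. The direct argument you present first is more elementary (it bypasses Theorem~2.2 entirely and uses only the defining formula), and since the paper offers no details either way, your write-up is at least as good as what the paper provides.
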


\begin{corollary}
For a vector $x=(x_{1},...,x_{n})$ in $\mathbb{R}_{p(v_{1},...,v_{n})}^{n}$, the induced norm is 
\begin{equation}
\left\Vert x\right\Vert _{p(v_{1},...,v_{n})}=\left( \sum_{i=1}^{n}\left(
\lambda _{i}\left\vert v_{i1}x_{1}+...+v_{in}x_{n}\right\vert \right)
^{p}\right) ^{1/p}.
\end{equation}
\end{corollary}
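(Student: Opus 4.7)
The plan is to verify that the formula on the right-hand side really is the norm induced by the $(v_{1},\ldots,v_{n})$-Minkowski metric, i.e., that it equals $d_{p(v_{1},\ldots,v_{n})}(x,\mathbf{0})$ and satisfies the three norm axioms. First I would evaluate the metric between $x=(x_{1},\ldots,x_{n})$ and the origin directly from the definition in (1): setting $Y=\mathbf{0}$ collapses each $x_{i}-y_{i}$ to $x_{i}$, giving exactly the claimed expression for $\|x\|_{p(v_{1},\ldots,v_{n})}$. This identification is the only concrete computation needed; the rest is bookkeeping about what a translation-invariant, absolutely homogeneous metric yields.

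Next I would verify the norm axioms. Positive definiteness is immediate from property (M1) in Theorem 2.1: $\|x\|_{p(v_{1},\ldots,v_{n})}=d_{p(v_{1},\ldots,v_{n})}(x,\mathbf{0})=0$ if and only if $x=\mathbf{0}$, using the linear independence of $v_{1},\ldots,v_{n}$ exactly as in the (M1) argument. Absolute homogeneity $\|\alpha x\|_{p(v_{1},\ldots,v_{n})}=|\alpha|\,\|x\|_{p(v_{1},\ldots,v_{n})}$ follows by pulling $|\alpha|^{p}$ out of every summand of the $p$-th power and then taking the $p$-th root; this is a direct calculation that does not appear earlier in the paper and deserves one line of justification. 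For the triangle inequality I would combine Corollary 2.3 (translation invariance) with the triangle inequality from (M3): for any $x,y\in\mathbb{R}^{n}$,
\begin{equation*}
\|x+y\|_{p(v_{1},\ldots,v_{n})} = d_{p(v_{1},\ldots,v_{n})}(x+y,\mathbf{0}) \leq d_{p(v_{1},\ldots,v_{n})}(x+y,y) + d_{p(v_{1},\ldots,v_{n})}(y,\mathbf{0}),
\end{equation*}
and then apply translation invariance (translate the first summand by $-y$) to identify it with $\|x\|_{p(v_{1},\ldots,v_{n})}$.

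There is no real obstacle here; the statement is essentially a corollary of Theorem 2.1 together with Corollary 2.3, and the main point to flag carefully is absolute homogeneity, since that is the one norm property not already packaged as a metric axiom or an earlier corollary. I would therefore keep the write-up short: one line identifying $\|x\|_{p(v_{1},\ldots,v_{n})}$ with $d_{p(v_{1},\ldots,v_{n})}(x,\mathbf{0})$, one line each for positive definiteness and homogeneity, and two or three lines for the triangle inequality via translation invariance.
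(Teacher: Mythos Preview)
Your proposal is correct. The paper itself offers no proof at all for this corollary: it is listed among several results introduced with ``Now, the following corollaries are trivial,'' immediately after Theorem 2.2 and Corollary 2.3, so your verification is already more detailed than what the paper provides. One minor remark: since the corollaries are presented as consequences of Theorem 2.2, you could alternatively obtain absolute homogeneity from that theorem (the point $\alpha x$ lies on the line through $\mathbf{0}$ with direction $x$, so $d_{p(v_{1},\ldots,v_{n})}(\alpha x,\mathbf{0})=\phi(l)\,d_{E}(\alpha x,\mathbf{0})=|\alpha|\,\phi(l)\,d_{E}(x,\mathbf{0})=|\alpha|\,\|x\|_{p(v_{1},\ldots,v_{n})}$), but your direct computation from the formula is equally valid and arguably cleaner.
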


\begin{remark}
Instead of unit vectors, one can define $(v_{1},...,v_{n})$-Minkowski
distance for any linearly independent $n$ vectors $v_{1},...,v_{n}$ as
follows
\begin{equation}
d_{p(v_{1},...,v_{n})}^{\prime }(X,Y)=\left( \sum_{i=1}^{n}\left( \lambda
_{i}\frac{\left\vert v_{i1}(x_{1}-y_{1})+...+v_{in}(x_{n}-y_{n})\right\vert 
}{\left( v_{i1}^{2}+...+v_{in}^{2}\right) ^{1/2}}\right) ^{p}\right) ^{1/p}
\end{equation}%
or one can define it by unit vectors $v_{1},...,v_{n}$, and positive real
numbers $\mu _{1},...,\mu _{n}$ as follows
\begin{equation}
d_{p(v_{1},...,v_{n})}^{\prime \prime }(X,Y)=\left( \sum_{i=1}^{n}\mu
_{i}\left( \left\vert v_{i1}(x_{1}-y_{1})+...+v_{in}(x_{n}-y_{n})\right\vert
\right) ^{p}\right) ^{1/p}.
\end{equation}
These distance functions also determines metric families for $p\geq 1$,
generalizing the Minkowski distance. But then, we have 
\begin{equation}
d_{p(v_{1},...,v_{n})}^{\prime
}(X,Y)=d_{p(k_{1}v_{1},...,k_{n}v_{n})}^{\prime }(X,Y)\text{ for any }k_{i}\in \mathbb{R}-\{0\},
\end{equation}
and 
\begin{equation}
d_{M(v_{1},...,v_{n})}^{\prime \prime }(X,Y)=\lim\limits_{p\rightarrow
\infty }d_{p(v_{1},...,v_{n})}(X,Y)\hspace{-0.03in}=\hspace{-0.03in}%
\max\limits_{i\in \{1,...,n\}}\hspace{-0.03in}\left\{ \left\vert v_{i1}(x_{1}%
\hspace{-0.03in}-\hspace{-0.03in}y_{1})\hspace{-0.03in}+\hspace{-0.03in}...%
\hspace{-0.03in}+\hspace{-0.03in}v_{in}(x_{n}\hspace{-0.03in}-\hspace{-0.03in%
}y_{n})\right\vert \right\} ,
\end{equation}%
which is independent from $\mu _{i}$. However, we see that for every $p$
values, circles of $d_{p(v_{1},v_{2})}$ distance having the same center and
radius, have four common points, and they are nested inside one another. So,
it is easier to illustrate their circles in a figure (see Figure 1 for some
examples of $(v_{1},v_{2})$-Minkowski circles having the same center and
radius).
\end{remark}

\begin{center}
\includegraphics[width=4 in]{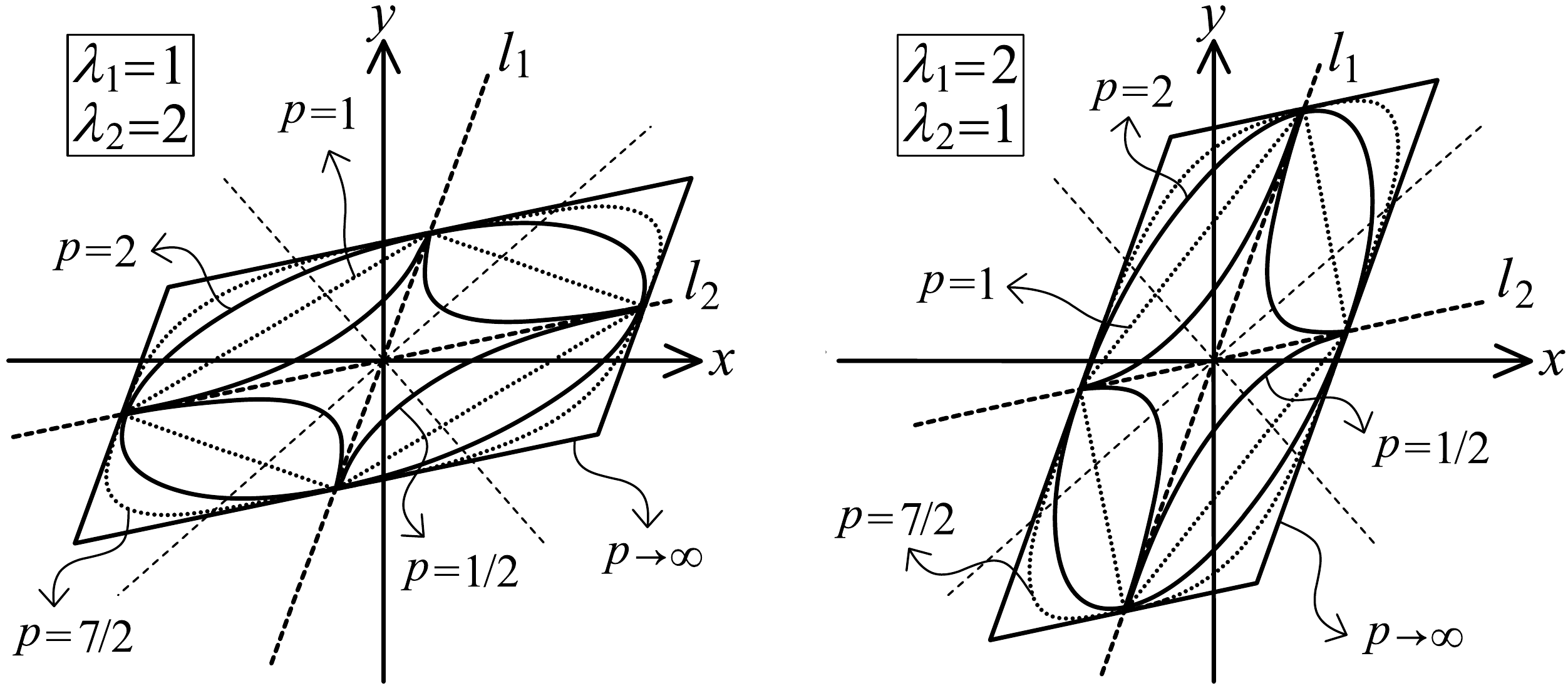}
\end{center}
\begin{center}
\textbf{Figure 1.} The unit $(v_{1},v_{2})$-Minkowski circles; $v_{1}=\left( \frac{3}{\sqrt{10}},\frac{-1}{\sqrt{10}}\right) $, $v_{2}=\left( \frac{-1}{\sqrt{26}},\frac{5}{\sqrt{26}}\right) $.
\end{center}

\noindent In the next sections, we investigate circles of $\mathbb{R}_{p(v_{1},v_{2})}^{2}$ 
for $p=1$, $p=2$ and $p\rightarrow \infty $, that we
call them $(v_{1},v_{2})$-taxicab, $(v_{1},v_{2})$-Euclidean and $(v_{1},v_{2})$-maximum circles respectively, having the case of $p=2$ at the
last in which we use circles of the other two cases. We use the coordinate
axes $x$ and $y$ as usual, instead of $x_{1}$ and $x_{2}$, while we
investigate circles of $\mathbb{R}_{p(v_{1},v_{2})}^{2}$, and throughout the paper, 
we denote by $l_{1}$ and $l_{2}$, the lines through center $C$ of a $(v_{1},v_{2})$-Minkowski circle
and perpendicular to unit vectors $v_{1}$ and $v_{2}$ respectively, that is $l_{i}=\Psi _{C}^{v_{i}}$.

\section{Circles of the generalized taxicab metric in $\mathbb{R}^{2}$}

\noindent By Definition 2.1 and Remark 2.1, $(v_{1},v_{2})$-taxicab distance
between points $P_{1}=(x_{1},y_{1})$ and $P_{2}=(x_{2},y_{2})$ in $\mathbb{R}^{2}$ is
\begin{eqnarray*}
d_{T(v_{1},v_{2})}(P_{1},P_{2}) &=&\lambda _{1}\left\vert
v_{11}(x_{1}-x_{2})+v_{12}(y_{1}-y_{2})\right\vert +\lambda _{2}\left\vert
v_{21}(x_{1}-x_{2})+v_{22}(y_{1}-y_{2})\right\vert \\
&=&\lambda _{1}\text{\thinspace }d_{E}(P_{2},l_{1})+\lambda _{2}\text{%
\thinspace }d_{E}(P_{2},l_{2})
\end{eqnarray*}%
that is, the sum of weighted Euclidean distances from the point $P_{2}$ to
the lines $l_{1}$ and $l_{2}$, which are passing through$\ P_{1}$ and
perpendicular to the vectors $v_{1}$ and $v_{2}$ respectively. For vectors 
$v_{1}=(1,0)$ and $v_{2}=(0,1)$, $d_{T(v_{1},v_{2})}$ in $\mathbb{R}^{2}$ is
the same as the (slightly) generalized taxicab metric (also known as the
weighted taxicab metric) defined in \cite{Wallen} (see also \cite{Col3} and 
\cite{Col4}). In addition, for unit vectors $v_{1}$ and $v_{2}$ such that 
$v_{1}\perp v_{2}$ and $v_{12}/v_{11}=m$ where $v_{11}\neq 0$, 
$d_{T(v_{1},v_{2})}$ in $\mathbb{R}^{2}$ is the same as the $m$-generalized
taxicab metric $d_{T_{g}(m)}$ defined in \cite{Col1}.\medskip

\noindent The following theorem determines circles of the generalized
taxicab metric $d_{T(v_{1},v_{2})}$ in $\mathbb{R}^{2}$:

\begin{theorem}
Every $(v_{1},v_{2})$-taxicab circle is a parallelogram with the same
center, each of whose diagonals is perpendicular to $v_{1}$ or $v_{2}$. In
addition, if $\lambda _{1}=\lambda _{2}$ then it is a rectangle, if 
$v_{1}\perp v_{2}$ then it is a rhombus, and if $\lambda _{1}=\lambda _{2}$
and $v_{1}\perp v_{2}$ then it is a square.
\end{theorem}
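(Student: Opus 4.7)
The plan is to place the center $C$ at the origin (justified by translation invariance, Corollary~2.3) so that the circle of radius $r$ is the locus
\[
\lambda_{1}|v_{11}x+v_{12}y|+\lambda_{2}|v_{21}x+v_{22}y|=r.
\]
Because this equation is piecewise linear in the two signed quantities $u_{1}=v_{11}x+v_{12}y$ and $u_{2}=v_{21}x+v_{22}y$, the plane is cut by the lines $l_{1}\colon u_{1}=0$ and $l_{2}\colon u_{2}=0$ (which pass through $C$ and are perpendicular to $v_{1}$ and $v_{2}$, respectively) into four angular sectors, and in each sector the locus is an affine line. Hence the circle is a quadrilateral whose sides lie on four lines whose equations I will write out explicitly by resolving the absolute values.

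Next I would locate the four vertices. A vertex occurs precisely where the sign pattern changes, i.e.\ on $l_{1}$ or $l_{2}$. On $l_{1}$ the equation reduces to $\lambda_{2}|u_{2}|=r$, yielding two points symmetric about $C$; analogously for $l_{2}$. Thus the two diagonals of the quadrilateral lie on $l_{1}$ and $l_{2}$, so each diagonal is perpendicular to $v_{1}$ or $v_{2}$, and the two diagonals cross at $C$ and are bisected there. A quadrilateral whose diagonals bisect each other is a parallelogram, which proves the main assertion and simultaneously identifies $C$ as its center.

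For the three refinements I would compute the diagonal lengths. Parametrizing $l_{1}$ by its unit direction $(-v_{12},v_{11})$ (perpendicular to the unit vector $v_{1}$), the vertex condition $\lambda_{2}|u_{2}|=r$ becomes $\lambda_{2}|t|\,|\det(v_{1},v_{2})|=r$, giving diagonal length $2r/(\lambda_{2}|\det(v_{1},v_{2})|)$ on $l_{1}$; symmetrically one obtains $2r/(\lambda_{1}|\det(v_{1},v_{2})|)$ on $l_{2}$. The diagonals are therefore equal iff $\lambda_{1}=\lambda_{2}$, proving the rectangle case, while the diagonals are perpendicular iff $l_{1}\perp l_{2}$ iff $v_{1}\perp v_{2}$, proving the rhombus case; the square case is the conjunction of the two.

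I do not expect a serious obstacle: the only point that needs a little care is making sure that the four affine pieces actually close up into a simple quadrilateral rather than a degenerate figure, but this follows at once from the linear independence of $v_{1}$ and $v_{2}$, which makes $\det(v_{1},v_{2})\neq 0$ and guarantees that the four vertices are all distinct and nondegenerate. The bookkeeping of the four sign cases, which in principle is the tedious step, can be handled uniformly by the change of variables $(x,y)\mapsto(u_{1},u_{2})$, in which the circle is simply the axis-parallel diamond $\lambda_{1}|u_{1}|+\lambda_{2}|u_{2}|=r$.
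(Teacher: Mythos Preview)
Your proof is correct and follows essentially the same line as the paper's: both recognize the circle as the linear image of the standard diamond $|x|+|y|=1$ (you via the change of variables $(x,y)\mapsto(u_{1},u_{2})$, the paper via the explicit inverse matrix $T$), locate the four vertices on $l_{1}$ and $l_{2}$, and then deduce the rectangle/rhombus/square cases from equal-length or perpendicular diagonals. The only cosmetic difference is that the paper lists the four vertex coordinates explicitly and checks that opposite sides are parallel, whereas you use the cleaner observation that the diagonals bisect each other.
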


\begin{proof}
Without loss of generality, let us consider the unit $(v_{1},v_{2})$-taxicab
circle. Clearly, it is the set of points $P=(x,y)$ in $\mathbb{R}^{2}$
satisfying the equation 
\begin{equation}
d_{T(v_{1},v_{2})}(O,P)=\lambda _{1}\,d_{E}(P,l_{1})+\lambda
_{2}\,d_{E}(P,l_{2})=1
\end{equation}%
where $l_{i}:v_{i1}x+v_{i2}y=0$ for $i=1,2$, that is
\begin{equation}
\lambda _{1}\left\vert v_{11}x+v_{12}y\right\vert +\lambda _{2}\left\vert
v_{21}x+v_{22}y\right\vert =1.
\end{equation}%
One can see that this equation is the image of $\left\vert x\right\vert
+\left\vert y\right\vert =1$ which is the well-known taxicab circle, under
the linear transformation 
\begin{equation}
T\left( \left[ 
\begin{array}{c}
x \\ 
y%
\end{array}%
\right] \right) =\left[ 
\begin{array}{cc}
\frac{v_{22}}{\lambda _{1}\tau } & \frac{-v_{12}}{\lambda _{2}\tau } \\ 
\frac{-v_{21}}{\lambda _{1}\tau } & \frac{v_{11}}{\lambda _{2}\tau }%
\end{array}%
\right] \left[ 
\begin{array}{c}
x \\ 
y%
\end{array}%
\right]
\end{equation}
where $\tau =\left\vert 
\begin{array}{cc}
v_{11} & v_{12} \\ 
v_{21} & v_{22}%
\end{array}%
\right\vert $. Thus, the unit $(v_{1},v_{2})$-taxicab circle is a
parallelogram symmetric about the origin, having vertices $A_{1}=\left( 
\frac{v_{22}}{\lambda _{1}\tau },\frac{-v_{21}}{\lambda _{1}\tau }\right) $, 
$A_{2}=\left( \frac{-v_{12}}{\lambda _{2}\tau },\frac{v_{11}}{\lambda
_{2}\tau }\right) $, $A_{3}=\left( \frac{-v_{22}}{\lambda _{1}\tau },\frac{%
v_{21}}{\lambda _{1}\tau }\right) $, $A_{4}=\left( \frac{v_{12}}{\lambda
_{2}\tau },\frac{-v_{11}}{\lambda _{2}\tau }\right) $, and having diagonals
on the lines $l_{1}$ and $l_{2}$, each of which is perpendicular to $v_{1}$
or $v_{2}$, since 
\begin{equation*}
A_{1}A_{2}//A_{3}A_{4}\text{, }A_{1}A_{4}//A_{2}A_{3}\text{, }%
A_{2}A_{4}=l_{1}\text{ and }A_{1}A_{3}=l_{2}.
\end{equation*}%
In addition, if $\lambda _{1}=\lambda _{2}$ then 
$d_{E}(O,A_{1})=d_{E}(O,A_{2})$ and since a parallelogram having diagonals of
the same length is a rectangle, the unit $(v_{1},v_{2})$-taxicab circle is a
rectangle. Notice that sides of the rectangle are parallel to angle
bisectors of the lines $l_{1}$ and $l_{2}$. If $v_{1}\perp v_{2}$ then 
$l_{1}\perp l_{2}$ and since a parallelogram having perpendicular diagonals
is a rhombus, the unit $(v_{1},v_{2})$-taxicab circle is a rhombus. Finally,
it is clear that if $\lambda _{1}=\lambda _{2}$ and $v_{1}\perp v_{2}$, then
the unit $(v_{1},v_{2})$-taxicab circle is a square (see Figure 2 and Figure
3 for examples of the unit $(v_{1},v_{2})$-taxicab circles).
\end{proof}

\begin{center}
\includegraphics[width=4 in]{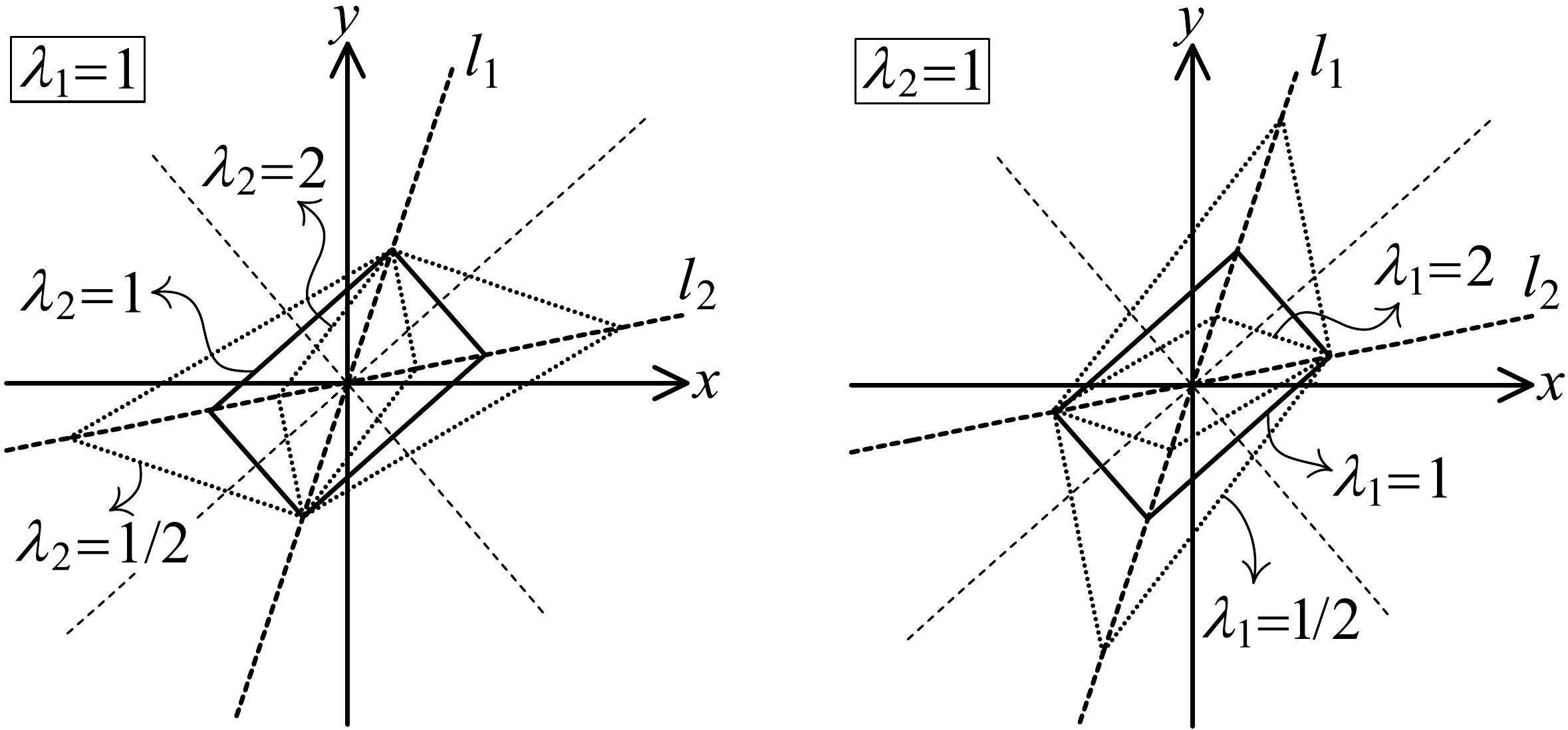} \vspace{-0.05in}
\end{center}
\begin{center}
\textbf{Figure 2.} The unit $(v_{1},v_{2})$-taxicab circles for 
$v_{1}=\left( \frac{3}{\sqrt{10}},\frac{-1}{\sqrt{10}}\right) $, 
$v_{2}=\left( \frac{-1}{\sqrt{26}},\frac{5}{\sqrt{26}}\right) $.
\end{center}

\begin{center}
\includegraphics[width=4 in]{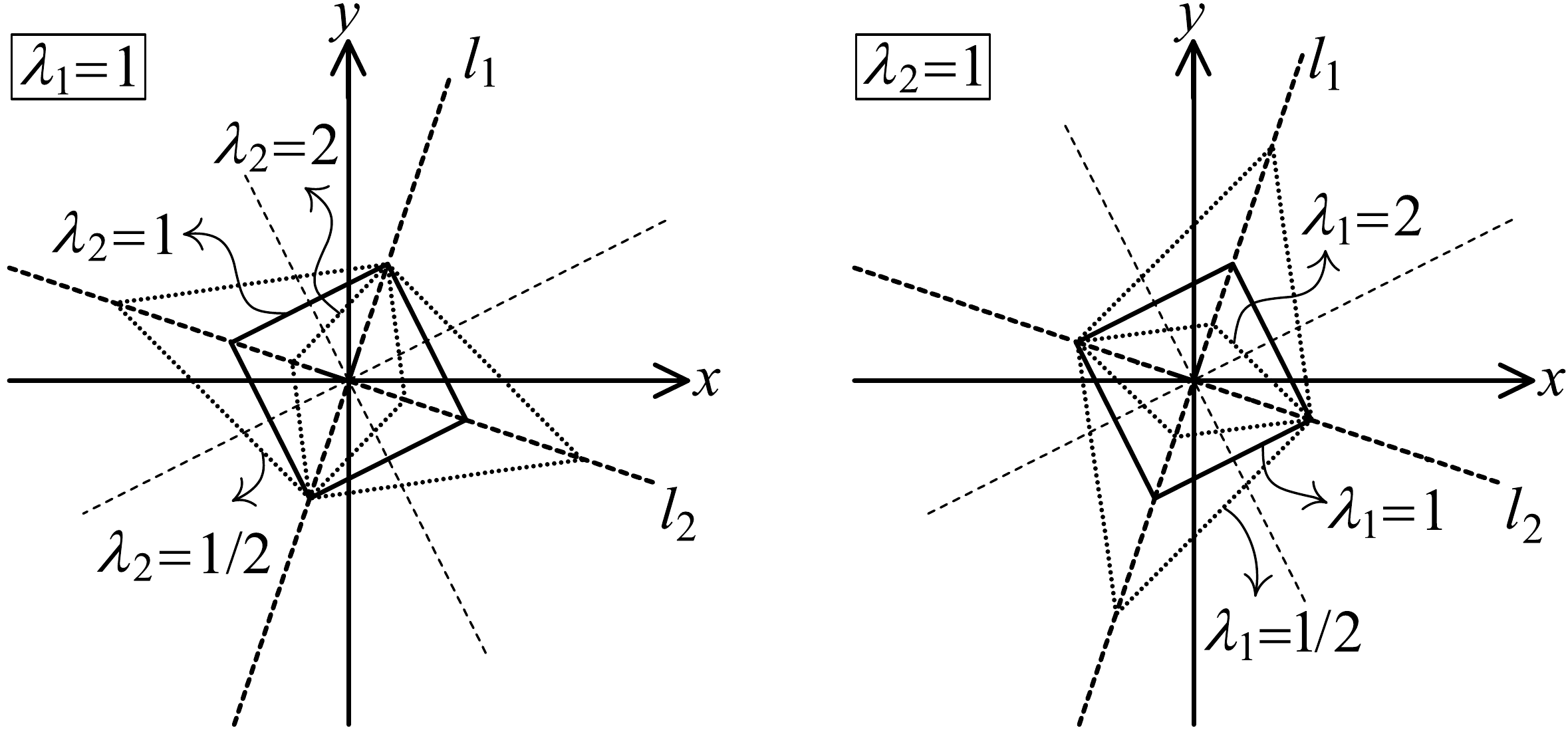} \vspace{-0.05in}
\end{center}
\begin{center}
\textbf{Figure 3.} The unit $(v_{1},v_{2})$-taxicab circles\ for 
$v_{1}=\left( \frac{3}{\sqrt{10}},\frac{-1}{\sqrt{10}}\right) $, 
$v_{2}=\left( \frac{1}{\sqrt{10}},\frac{3}{\sqrt{10}}\right) $. \smallskip 
\end{center}

\noindent Let us consider the case of $\lambda _{1}=\lambda _{2}=1$: Now, we
know that a $(v_{1},v_{2})$-taxicab circle with center $C$ and radius $r$,
that is the set of all points $P$ satisfying the equation 
\begin{equation*}
d_{E}(P,l_{1})+d_{E}(P,l_{2})=r,
\end{equation*}%
is a rectangle with the same center, whose diagonals are on the lines $l_{1}$
and $l_{2}$, and sides are parallel to angle bisectors of the lines $l_{1}$
and $l_{2}$. Besides, if $v_{1}\perp v_{2}$ then $(v_{1},v_{2})$-taxicab
circle is a square with the same properties. On the other hand, for a point 
$Q_{i}$ on both line $l_{i}$ and the $(v_{1},v_{2})$-taxicab circle (see
Figure 4), it is clear that 
\begin{equation*}
d_{E}(Q_{1},l_{2})=d_{E}(Q_{2},l_{1})=r.
\end{equation*}

\begin{center}
\includegraphics[width=3.5 in]{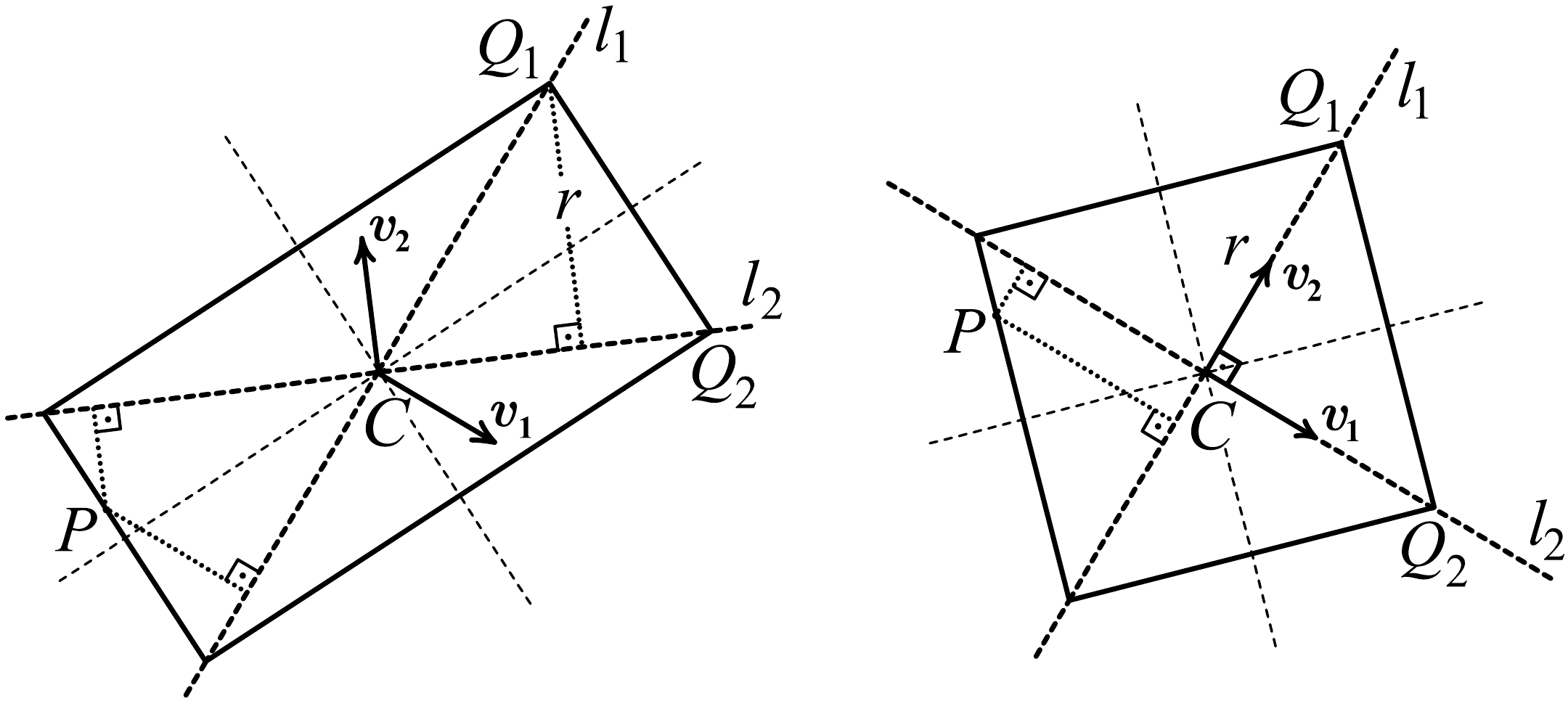} \vspace{-0.1in}
\end{center}
\begin{center}
\textbf{Figure 4.} $(v_{1},v_{2})$-taxicab circles with center $C$ and radius $r$, 
for $\lambda _{1}=\lambda _{2}=1$.
\end{center}

\noindent The following theorem shows that every rectangle is a 
$(v_{1},v_{2})$-taxicab circle with the same center for a proper generalized
taxicab metric $d_{T(v_{1},v_{2})}$ with $\lambda _{1}=\lambda _{2}=1$:

\begin{theorem}
Every rectangle with sides of lengths $2a$ and $2b$, is a 
$(v_{1},v_{2})$-taxicab circle with the same center and the radius 
$\frac{2ab}{\sqrt{a^{2}+b^{2}}}$, for $\lambda _{1}=\lambda _{2}=1$ and 
linearly independent unit vectors $v_{1}$ and $v_{2}$, each of which is 
perpendicular to a diagonal of the rectangle.
\end{theorem}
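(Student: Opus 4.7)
The plan is to take a given rectangle, place it conveniently in coordinates, pick the two unit vectors $v_1,v_2$ so that each is perpendicular to one diagonal, and then check that the four vertices of the rectangle satisfy the equation of the $(v_1,v_2)$-taxicab circle of the claimed radius centered at the rectangle's center. Once that is done, an appeal to Theorem 3.1 (applied in the $\lambda_1=\lambda_2=1$ case) will identify the circle with the given rectangle.

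Concretely, first I would invoke Corollary 2.3 (translation invariance) to move the center of the rectangle to the origin, and then choose the Cartesian frame so that the rectangle has vertices $A_1=(a,b)$, $A_2=(-a,b)$, $A_3=(-a,-b)$, $A_4=(a,-b)$. Its two diagonals then lie along the directions $(a,b)$ and $(a,-b)$, so I would take the unit vectors
\begin{equation*}
v_1=\tfrac{1}{\sqrt{a^2+b^2}}(-b,a),\qquad v_2=\tfrac{1}{\sqrt{a^2+b^2}}(b,a),
\end{equation*}
which are linearly independent and perpendicular to the two diagonals respectively. With this choice, the lines $l_1=\Psi_O^{v_1}$ and $l_2=\Psi_O^{v_2}$ coincide with the two diagonals of the rectangle.

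Next I would compute $d_{T(v_1,v_2)}(O,A_k)$ for each vertex $A_k$, using $\lambda_1=\lambda_2=1$ and the definition as a sum of Euclidean distances to $l_1$ and $l_2$. Since $A_1$ and $A_3$ lie on $l_1$ while $A_2$ and $A_4$ lie on $l_2$, one of the two summands vanishes in each case. A direct computation of the remaining summand, e.g.\ $|v_2\cdot A_1|=\frac{2ab}{\sqrt{a^2+b^2}}$, shows that all four vertices are at $(v_1,v_2)$-taxicab distance $r=\frac{2ab}{\sqrt{a^2+b^2}}$ from the origin. This is the only real computation in the proof and it is routine.

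Finally, I would close the argument by invoking Theorem 3.1: with $\lambda_1=\lambda_2=1$, the $(v_1,v_2)$-taxicab circle of radius $r$ centered at the origin is itself a rectangle, symmetric about the origin, whose diagonals lie on $l_1$ and $l_2$. The given rectangle shares exactly these features (center, diagonal lines), and the previous step shows that its four vertices all lie on this circle. Since a rectangle centered at the origin with prescribed diagonal lines is determined by any one of its vertices, the two rectangles must coincide. The only delicate point, and what I would be most careful to state clearly, is this last uniqueness step; everything else is a direct verification once $v_1$ and $v_2$ are chosen correctly.
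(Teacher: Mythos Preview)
Your argument is correct, but it takes a different route from the paper. The paper gives a synthetic proof that every point $P$ on the rectangle satisfies the circle equation: it observes that the sides of the rectangle are angle bisectors of the pairs $d_1,d_2'$ and $d_1,d_2''$ (where $d_2',d_2''$ are the lines through the two vertices on $d_1$ parallel to $d_2$), so $d_E(P,d_1)+d_E(P,d_2)$ is constant along each side and equals $d_E(d_2,d_2')$; an area computation then gives $d_E(d_2,d_2')=\tfrac{2ab}{\sqrt{a^2+b^2}}$. No coordinates are introduced and Theorem~3.1 is not invoked.

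Your approach instead sets up coordinates, checks only the four vertices, and then leans on Theorem~3.1 together with the observation that a rectangle with prescribed center and diagonal lines is determined by a single vertex. This is shorter once Theorem~3.1 is available, and the uniqueness step is clean because each $A_k$ lies on one of the diagonal lines $l_1,l_2$, so it must be a vertex of the circle-rectangle. The paper's argument, on the other hand, is self-contained and yields the pointwise identity $d_E(P,l_1)+d_E(P,l_2)=r$ for \emph{all} $P$ on the rectangle directly, via the angle-bisector insight, rather than deducing it from the structure theorem.
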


\begin{proof}
Without loss of generality, let us consider a rectangle with center $C$ and
sides of lengths $2a$ and $2b$, as in Figure 5. Denote the diagonal lines of
the rectangle by $d_{1}$ and $d_{2}$. Clearly, $C$ is the intersection point
of $d_{1}$ and $d_{2}$. Draw two lines $d_{2}^{\prime }$ and $d_{2}^{\prime
\prime }$, each of them is passing through a vertex on $d_{1}$ and parallel
to $d_{2}$. Since sides of the rectangle are angle bisectors of pair of
lines $d_{1},d_{2}^{\prime }$ and $d_{1},d_{2}^{\prime \prime }$, we have 
\begin{equation}
d_{E}(P,d_{1})+d_{E}(P,d_{2})=d_{E}(d_{2},d_{2}^{\prime
})=d_{E}(d_{2},d_{2}^{\prime \prime }).
\end{equation}
On the other hand, for the area of the rectangle we have
\begin{equation}
4ab=2\sqrt{a^{2}+b^{2}}d_{E}(d_{2},d_{2}^{\prime }),
\end{equation}
so we get 
\begin{equation}
d_{E}(d_{2},d_{2}^{\prime })=\frac{2ab}{\sqrt{a^{2}+b^{2}}}.
\end{equation}
Then, for every point $P$ on the rectangle, we have
\begin{equation}
d_{E}(P,d_{1})+d_{E}(P,d_{2})=\frac{2ab}{\sqrt{a^{2}+b^{2}}}.
\end{equation}
Thus, for $\lambda _{1}=\lambda _{2}=1$ and linearly independent unit
vectors $v_{1}$ and $v_{2}$, each of which is perpendicular to a diagonal,
the rectangle is a $(v_{1},v_{2})$-taxicab circle with center $C$ and radius 
$2ab/\sqrt{a^{2}+b^{2}}$.\vspace{-0.1in}
\end{proof}

\begin{center}
\includegraphics[width=2 in]{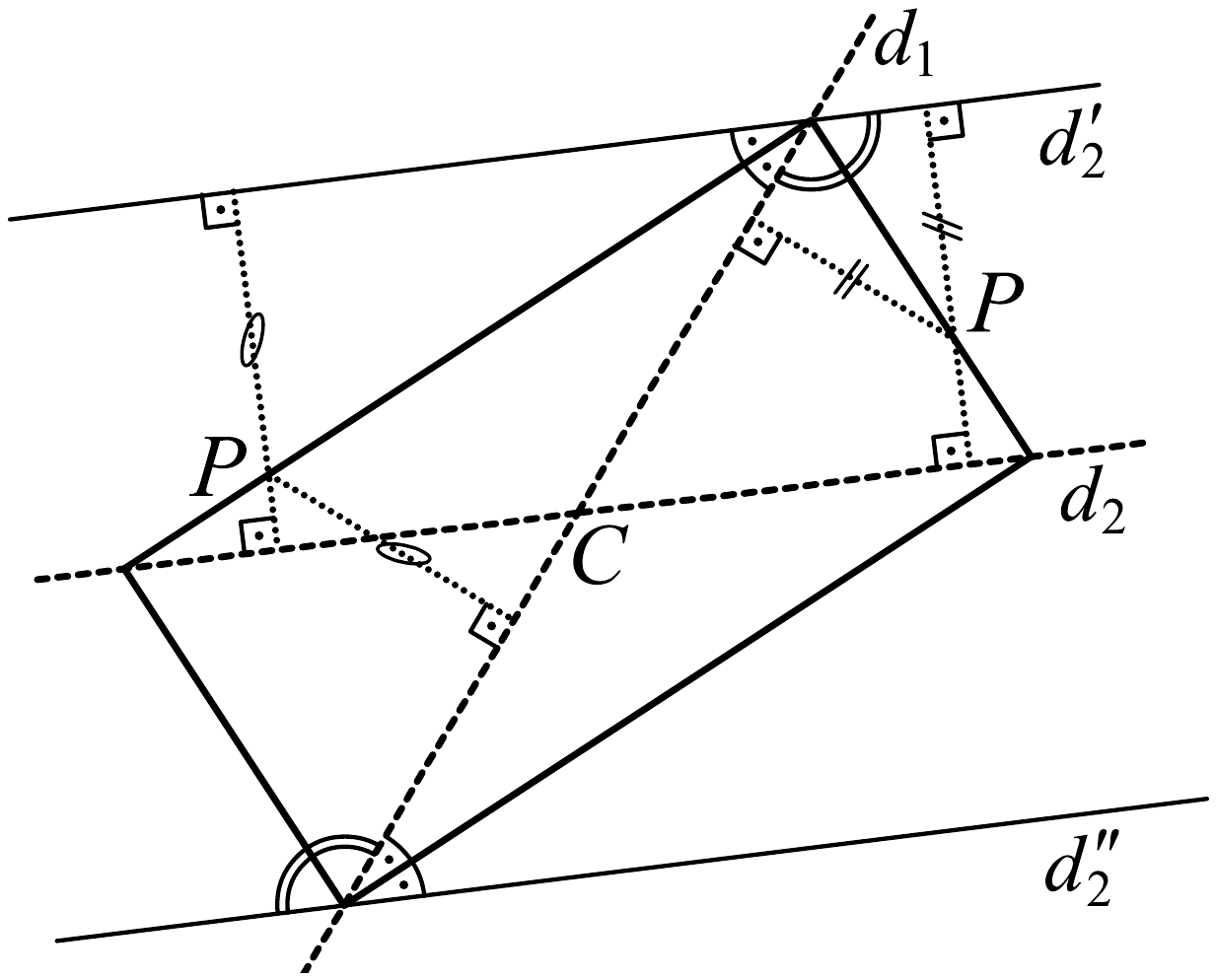}
\end{center}
\begin{center}
\textbf{Figure 5.} A rectangle with center $C$ and
sides of lengths $2a$ and $2b$.
\end{center}

\section{Circles of the generalized maximum metric in $\mathbb{R}^{2}$}

\noindent By Definition 2.1 and Remark 2.1, $(v_{1},v_{2})$-maximum distance
between points $P_{1}=(x_{1},y_{1})$ and $P_{2}=(x_{2},y_{2})$ in $\mathbb{R}^{2}$ is
\begin{eqnarray*}
d_{M(v_{1},v_{2})}(P_{1},P_{2}) &=&\max \{\lambda _{1}\left\vert
v_{11}(x_{1}-x_{2})+v_{12}(y_{1}-y_{2})\right\vert ,\lambda _{2}\left\vert
v_{21}(x_{1}-x_{2})+v_{22}(y_{1}-y_{2})\right\vert \} \\
&=&\max \{\lambda _{1}\text{\thinspace }d_{E}(P_{2},l_{1}),\lambda _{2}\text{%
\thinspace }d_{E}(P_{2},l_{2})\}
\end{eqnarray*}
that is the maximum of weighted Euclidean distances from the point $P_{2}$
to the lines $l_{1}$ and $l_{2}$, which are passing through$\ P_{1}$ and
perpendicular to the vectors $v_{1}$ and $v_{2}$ respectively. \medskip

\noindent The following theorem determines circles of the generalized
maximum metric $d_{M(v_{1},v_{2})}$ in $\mathbb{R}^{2}$:

\begin{theorem}
Every $(v_{1},v_{2})$-maximum circle is a parallelogram with the same
center, each of whose sides is perpendicular to $v_{1}$ or $v_{2}$. In
addition, if $\lambda _{1}=\lambda _{2}$ then it is a rhombus, if $%
v_{1}\perp v_{2}$ then it is a rectangle, and if $\lambda _{1}=\lambda _{2}$
and $v_{1}\perp v_{2}$ then it is a square.
\end{theorem}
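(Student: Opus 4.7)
The plan is to mirror the strategy of Theorem 3.1, but now the relevant ``base'' circle is the standard maximum circle $\max\{|x|,|y|\}=1$ (the unit square with vertices $(\pm 1,\pm 1)$) rather than the standard taxicab circle. It suffices to treat the unit $(v_{1},v_{2})$-maximum circle centered at $O$, since translations preserve the distance. This circle is the locus
\begin{equation*}
\max\{\lambda _{1}|v_{11}x+v_{12}y|,\;\lambda _{2}|v_{21}x+v_{22}y|\}=1,
\end{equation*}
which is equivalently the boundary of the intersection of the two strips $|v_{i1}x+v_{i2}y|\leq 1/\lambda _{i}$, $i=1,2$. Each strip is bounded by two parallel lines of equation $v_{i1}x+v_{i2}y=\pm 1/\lambda _{i}$, whose normal vector is $v_{i}$. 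So every side of the resulting region lies on a line perpendicular to $v_{1}$ or $v_{2}$.

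Next I would introduce the change of variables $u=\lambda _{1}(v_{11}x+v_{12}y)$, $w=\lambda _{2}(v_{21}x+v_{22}y)$, under which the equation becomes $\max\{|u|,|w|\}=1$. Since $v_{1},v_{2}$ are linearly independent, this is an invertible linear map; its inverse sends the standard unit square $\max\{|u|,|w|\}=1$ onto our circle. Linear maps carry the vertices $(\pm 1,\pm 1)$ to four points symmetric about $O$ and carry the four sides of the square to four segments pairwise parallel, so the image is a parallelogram with center $O$. Combined with the previous paragraph, each of its sides is perpendicular to $v_{1}$ or $v_{2}$.

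For the special cases I would argue as follows. If $\lambda _{1}=\lambda _{2}=\lambda $, then since $v_{1},v_{2}$ are unit vectors, the Euclidean distance from $O$ to each of the four supporting lines $v_{i1}x+v_{i2}y=\pm 1/\lambda $ equals $1/\lambda $. Thus the parallelogram circumscribes the Euclidean circle of radius $1/\lambda $ centered at $O$; a tangential parallelogram is a rhombus. If $v_{1}\perp v_{2}$, then the two sides perpendicular to $v_{1}$ are perpendicular to the two sides perpendicular to $v_{2}$, so adjacent sides meet at right angles and the parallelogram is a rectangle. When both conditions hold, it is simultaneously a rhombus and a rectangle, hence a square.

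The proof is essentially routine once the change of variables is in hand; the only point requiring a bit of care is the role reversal compared with Theorem 3.1: here the lines $l_{1},l_{2}$ through the center perpendicular to $v_{1},v_{2}$ are not the diagonals of the parallelogram but rather the two axes of symmetry joining midpoints of opposite sides. I would make this explicit and, if helpful, record the four vertices by applying the inverse linear transformation to $(\pm 1,\pm 1)$, in parallel with the presentation of Theorem 3.1.
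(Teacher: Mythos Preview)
Your argument is correct and follows the same overall strategy as the paper: reduce to the unit circle at the origin and identify it as the image of the standard maximum square $\max\{|u|,|w|\}=1$ under an invertible linear map. The paper writes out the inverse matrix explicitly, computes the four vertices $B_{1},\ldots,B_{4}$, and checks that the sides are parallel to $l_{1},l_{2}$; your strip-intersection remark obtains the perpendicularity of the sides to $v_{1},v_{2}$ more directly. For $\lambda_{1}=\lambda_{2}$ the paper argues via perpendicular diagonals ($OB_{1}\perp OB_{2}$), whereas your tangential-parallelogram argument (all four supporting lines at Euclidean distance $1/\lambda$ from $O$) is an equally valid alternative. One small slip in your closing paragraph: in a generic parallelogram the lines $l_{1},l_{2}$ joining midpoints of opposite sides are medians through the center, not reflective \emph{axes of symmetry}; reflective symmetry about them holds only in the rhombus or rectangle cases.
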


\begin{proof}
Without loss of generality, let us consider\ the unit 
$(v_{1},v_{2})$-maximum circle. Clearly, it is the set of points 
$P=(x,y)$ in $\mathbb{R}^{2}$ satisfying the equation 
\begin{equation}
d_{M(v_{1},v_{2})}(O,P)=\max \left\{ \lambda _{1}\,d_{E}(P,l_{1}),\lambda
_{2}\,d_{E}(P,l_{2})\right\} =1
\end{equation}%
where $l_{i}:v_{i1}x+v_{i2}y=0$ for $i=1,2$, that is
\begin{equation}
\max \left\{ \lambda _{1}\left\vert v_{11}x_{1}+v_{12}x_{2}\right\vert
,\lambda _{2}\left\vert v_{21}x_{1}+v_{22}x_{2}\right\vert \right\} =1.
\end{equation}
One can see that this equation is the image of $\max \left\{ \left\vert
x\right\vert ,\left\vert y\right\vert \right\} =1$ which is the well-known
maximum circle, under the linear transformation
\begin{equation}
T\left( \left[ 
\begin{array}{c}
x \\ 
y%
\end{array}%
\right] \right) =\left[ 
\begin{array}{cc}
\frac{v_{22}}{\lambda _{1}\tau } & \frac{-v_{12}}{\lambda _{2}\tau } \\ 
\frac{-v_{21}}{\lambda _{1}\tau } & \frac{v_{11}}{\lambda _{2}\tau }%
\end{array}%
\right] \left[ 
\begin{array}{c}
x \\ 
y%
\end{array}%
\right]
\end{equation}
where $\tau =\left\vert 
\begin{array}{cc}
v_{11} & v_{12} \\ 
v_{21} & v_{22}%
\end{array}%
\right\vert $. Thus, the unit $(v_{1},v_{2})$-maximum circle is a
parallelogram symmetric about the origin, having vertices $B_{1}=\left( 
\frac{-v_{12}\lambda _{1}+v_{22}\lambda _{2}}{\lambda _{1}\lambda _{2}\tau },%
\frac{v_{11}\lambda _{1}-v_{21}\lambda _{2}}{\lambda _{1}\lambda _{2}\tau }%
\right) $, $B_{2}=\left( \frac{-v_{12}\lambda _{1}-v_{22}\lambda _{2}}{%
\lambda _{1}\lambda _{2}\tau },\frac{v_{11}\lambda _{1}+v_{21}\lambda _{2}}{%
\lambda _{1}\lambda _{2}\tau }\right) $, \newline
$B_{3}=\left( \frac{v_{12}\lambda _{1}-v_{22}\lambda _{2}}{\lambda
_{1}\lambda _{2}\tau },\frac{-v_{11}\lambda _{1}+v_{21}\lambda _{2}}{\lambda
_{1}\lambda _{2}\tau }\right) $, $B_{4}=\left( \frac{v_{12}\lambda
_{1}+v_{22}\lambda _{2}}{\lambda _{1}\lambda _{2}\tau },\frac{-v_{11}\lambda
_{1}-v_{21}\lambda _{2}}{\lambda _{1}\lambda _{2}\tau }\right) $, and having
sides parallel to the lines $l_{1}$ and $l_{2}$, each of which is
perpendicular to $v_{1}$ or $v_{2}$, since 
\begin{equation*}
B_{1}B_{2}//B_{3}B_{4}//l_{2}\text{\ and }B_{1}B_{4}//B_{2}B_{3}//l_{1}.
\end{equation*}
In addition, if $\lambda _{1}=\lambda _{2}$ then $OB_{1}\perp OB_{2}$ and
since a parallelogram having perpendicular diagonals is a rhombus, the unit 
$(v_{1},v_{2})$-maximum circle is a rhombus. Notice that diagonals of the
rhombus are on angle bisectors of the lines $l_{1}$ and $l_{2}$. If 
$v_{1}\perp v_{2}$ then $l_{1}\perp l_{2}$ and since a parallelogram having
perpendicular sides is a rectangle, the unit $(v_{1},v_{2})$-maximum circle
is a rectangle. Finally, it is clear that if $\lambda _{1}=\lambda _{2}$ and 
$v_{1}\perp v_{2}$, then the unit $(v_{1},v_{2})$-maximum circle is a square
(see Figure 6 and Figure 7 for examples of the unit $(v_{1},v_{2})$-maximum
circles). 
\end{proof}

\begin{center}
\includegraphics[width=4.2 in]{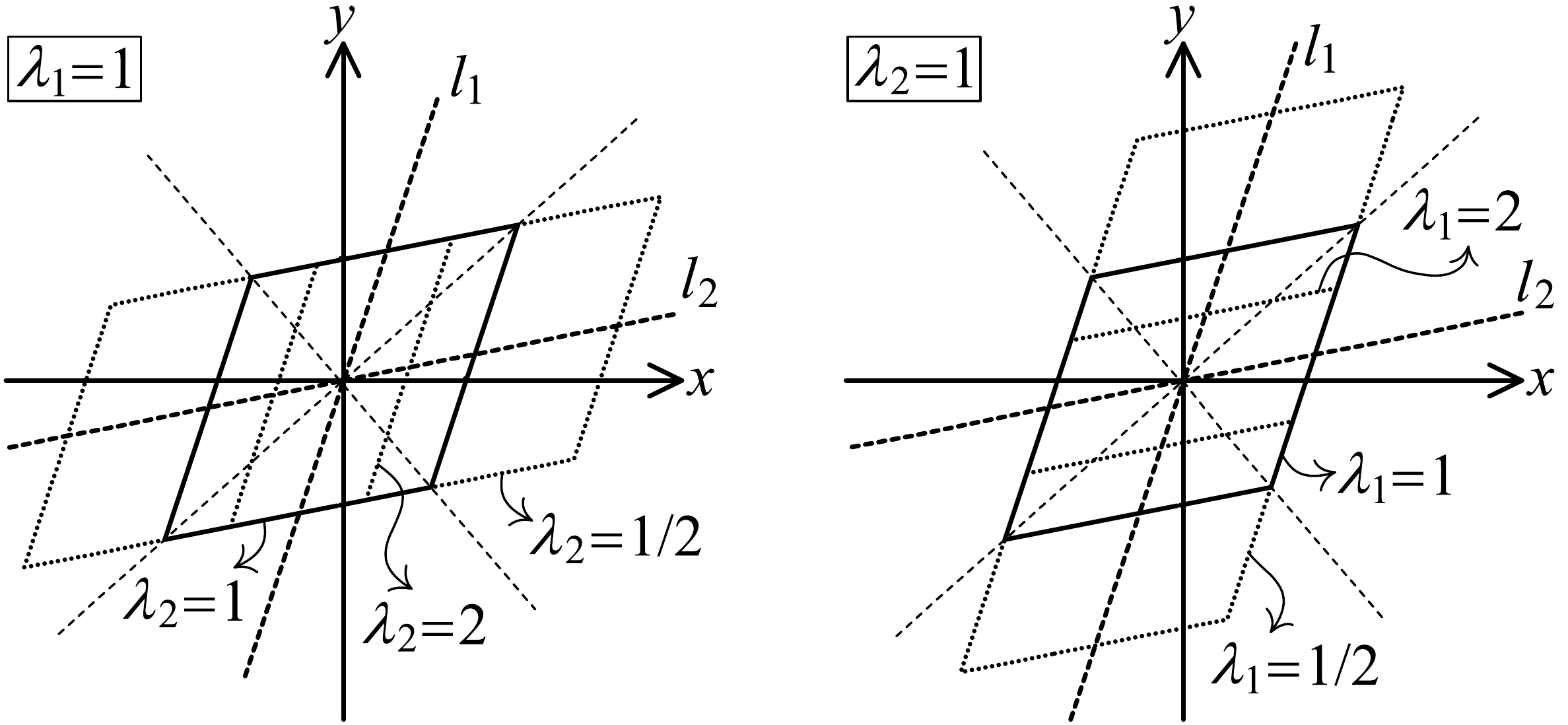}
\end{center}
\begin{center}
\textbf{Figure 6.} The unit $(v_{1},v_{2})$-maximum circles; 
$v_{1}=\left( \frac{3}{\sqrt{10}},\frac{-1}{\sqrt{10}}\right) $, 
$v_{2}=\left( \frac{-1}{\sqrt{26}},\frac{5}{\sqrt{26}}\right) $.\smallskip
\end{center}

\begin{center}
\includegraphics[width=4.2 in]{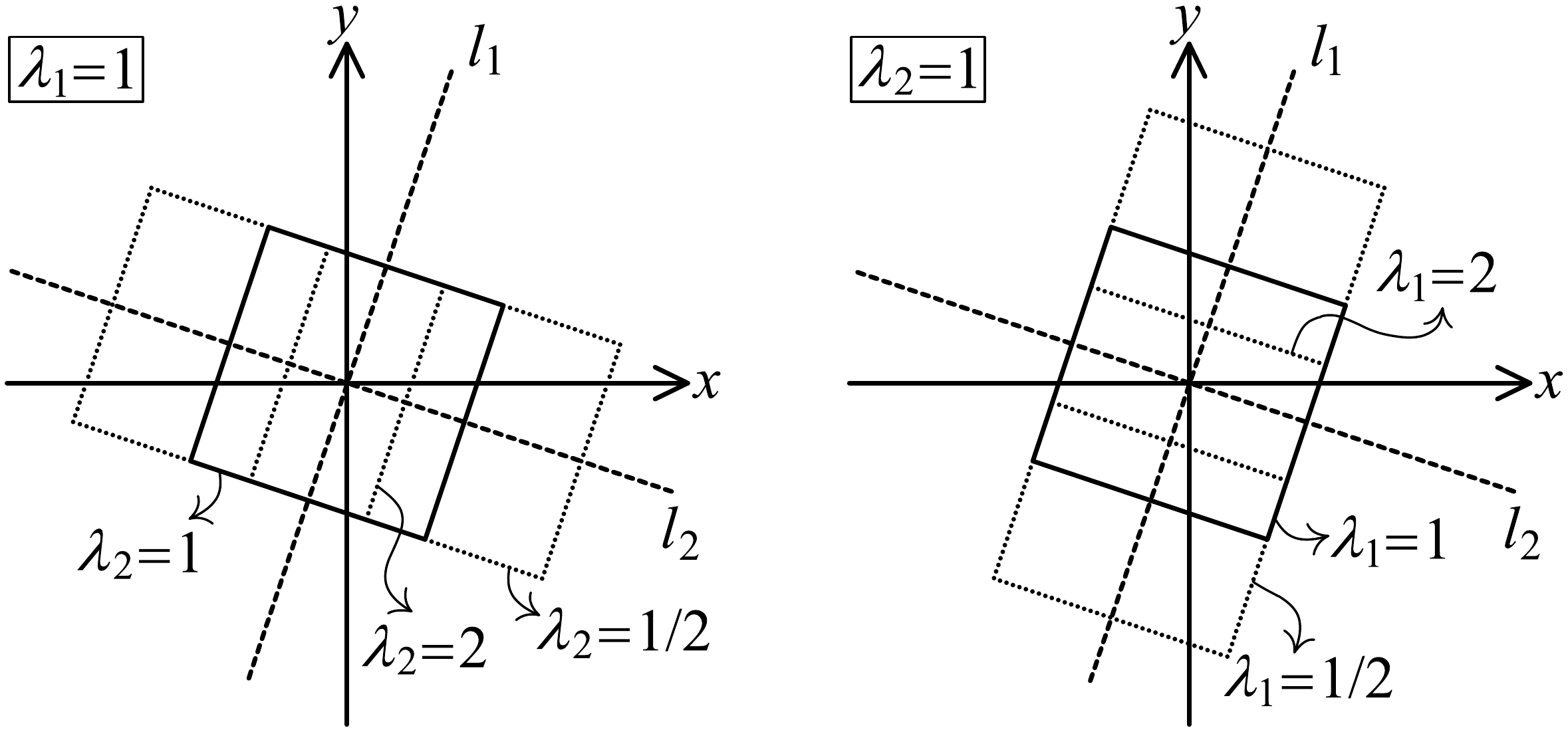} \vspace{-0.18in}
\end{center}
\begin{center}
\textbf{Figure 7.} The unit $(v_{1},v_{2})$-maximum circles\ for 
$v_{1}=\left( \frac{3}{\sqrt{10}},\frac{-1}{\sqrt{10}}\right) $, 
$v_{2}=\left( \frac{1}{\sqrt{10}},\frac{3}{\sqrt{10}}\right) $.\smallskip
\end{center}

\noindent Now let us consider the case of $\lambda _{1}=\lambda _{2}=1$:
Now, we know that a $(v_{1},v_{2})$-maximum circle with center $C$ and
radius $r$, that is the set of all points $P$ satisfying the equation  \vspace{-0.08in}
\begin{equation*}
\max \{d_{E}(P,l_{1}),d_{E}(P,l_{2})\}=r, \vspace{-0.08in}
\end{equation*}
is a rhombus with the same center, whose sides are parallel to the lines 
$l_{1}$ and $l_{2}$, and whose diagonals are on angle bisectors of the lines 
$l_{1}$ and $l_{2}$. Besides, if $v_{1}\perp v_{2}$ then the 
$(v_{1},v_{2})$-maximum circle is a square with the same properties. 
On the other hand, for a point $Q_{i}$ on both line $l_{i}$ and the 
$(v_{1},v_{2})$-maximum circle (see Figure 8), it is clear that  \vspace{-0.08in}
\begin{equation*}
d_{E}(Q_{1},l_{2})=d_{E}(Q_{2},l_{1})=r. \vspace{-0.08in}
\end{equation*}

\begin{center}
\includegraphics[width=3.3 in]{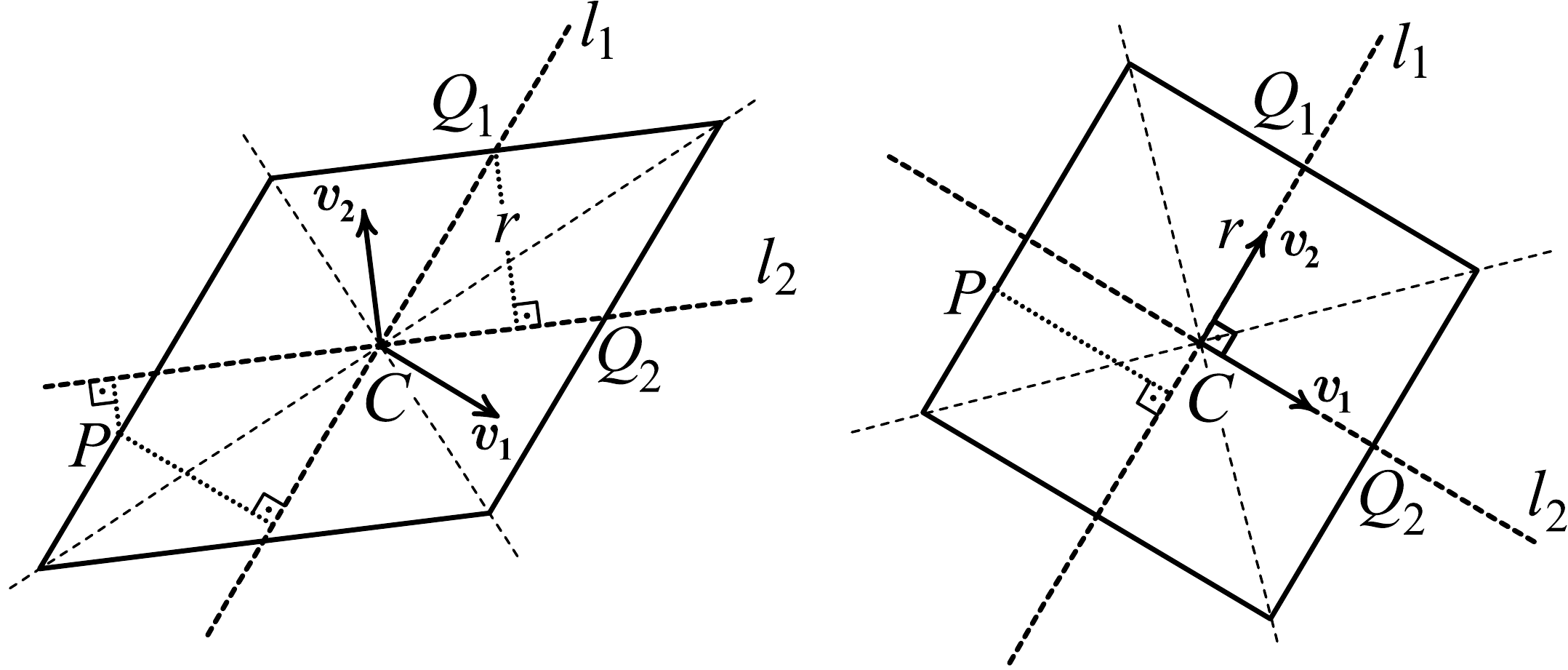} \vspace{-0.15in}
\end{center}
\begin{center}
\textbf{Figure 8.} $(v_{1},v_{2})$-maximum circles with center $C$ and radius $r$, for $\lambda _{1}=\lambda _{2}=1$. \medskip
\end{center}

\noindent The following theorem shows that every rhombus is a 
$(v_{1},v_{2})$-maximum circle with the same center for a proper generalized maximum metric 
$d_{M(v_{1},v_{2})}$ with $\lambda _{1}=\lambda _{2}=1$:

\begin{theorem}
Every rhombus with diagonals of lengths $2e$ and $2f$, is a 
$(v_{1},v_{2})$-maximum circle with the same center and the radius 
$\frac{ef}{\sqrt{e^{2}+f^{2}}}$, for $\lambda _{1}=\lambda _{2}=1$ and linearly independent
unit vectors $v_{1}$ and $v_{2}$, each of which is perpendicular to a side
of the rhombus.
\end{theorem}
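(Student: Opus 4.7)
The plan is to mirror the argument of Theorem~3.2, combining the structural description of $(v_1,v_2)$-maximum circles from Theorem~4.1 with a short geometric computation inside the given rhombus. Given $R$ with center $C$ and diagonals of lengths $2e$ and $2f$, I would choose $v_1, v_2$ to be unit vectors each perpendicular to one of the two pairs of parallel sides of $R$. These vectors are linearly independent (the two pairs of sides are not parallel to one another), and the lines $l_i = \Psi_C^{v_i}$ are then exactly the lines through $C$ parallel to the respective pairs of sides of $R$.

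With $\lambda_1 = \lambda_2 = 1$, Theorem~4.1 asserts that any $(v_1,v_2)$-maximum circle centered at $C$ is a rhombus whose sides are parallel to $l_1, l_2$ and whose diagonals lie on the angle bisectors of $l_1, l_2$. By the standard angle-bisecting property of the diagonals of a rhombus, together with the fact that the sides of $R$ are parallel to $l_1, l_2$ by construction, the diagonals of $R$ themselves lie on the angle bisectors of $l_1, l_2$. Hence $R$ and any $(v_1,v_2)$-maximum circle with center $C$ share the same pair of diagonal-lines and the same pair of parallel side-directions, and can differ only by a global scale factor.

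To fix the correct radius, I would use that the equation $\max\{d_E(P, l_1), d_E(P, l_2)\} = r$ forces each side of the maximum circle to sit at perpendicular distance exactly $r$ from $C$; so $r$ is simply the distance from $C$ to any side of $R$. Since the diagonals of a rhombus are perpendicular and bisect each other, the side length of $R$ is $s = \sqrt{e^2+f^2}$; combined with the area $\tfrac{1}{2}(2e)(2f) = 2ef$, the perpendicular distance between a pair of opposite sides of $R$ is $h = 2ef/\sqrt{e^2+f^2}$, giving $r = h/2 = ef/\sqrt{e^2+f^2}$, as claimed.

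I do not expect a genuine obstacle. The only subtlety is concluding that $R$ actually equals the $(v_1,v_2)$-maximum circle of center $C$ and radius $ef/\sqrt{e^2+f^2}$, rather than merely being contained in it; but two rhombi with the same center, the same pair of side-directions, and the same perpendicular distance from the center to each side must coincide, so this identification is automatic.
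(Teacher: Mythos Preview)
Your proof is correct and follows essentially the same route as the paper: both take the lines $l_1,l_2$ through $C$ parallel to the sides of the rhombus and obtain the radius $ef/\sqrt{e^{2}+f^{2}}$ from the area formula. The only cosmetic difference is that the paper verifies the equation $\max\{d_E(P,l_1),d_E(P,l_2)\}=r$ directly for points $P$ on the rhombus (using that the diagonals are the angle bisectors of $l_1,l_2$), whereas you invoke Theorem~4.1 together with a uniqueness-of-rhombus argument to reach the same conclusion.
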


\begin{proof}
Without loss of generality, let us consider a rhombus with center $C$ and
diagonals of lengths $2e$ and $2f$, as in Figure 9. Denote by $d_{1}$ and 
$d_{2}$, the two distinct lines each through $C$ and parallel to a side of
the rhombus. Since diagonals are angle bisectors of consecutive sides, we
have 
\begin{equation}
\max \{d_{E}(P,d_{1}),d_{E}(P,d_{2})\}=d_{E}(V,d_{1})=d_{E}(V,d_{2})
\end{equation}%
for any vertex $V$ of the rhombus. On the other hand, for the area of the
rhombus we have 
\begin{equation}
2ef=2\sqrt{e^{2}+f^{2}}d_{E}(V,d_{1}),
\end{equation}
so, we get 
\begin{equation}
d_{E}(V,d_{1})=\frac{ef}{\sqrt{e^{2}+f^{2}}}.
\end{equation}
Then, for every point $P$ on the rhombus we have 
\begin{equation}
\max \{d_{E}(P,d_{1}),d_{E}(P,d_{2})\}=\frac{ef}{\sqrt{e^{2}+f^{2}}}.
\end{equation}%
Thus, for $\lambda _{1}=\lambda _{2}=1$ and linearly independent unit
vectors $v_{1}$ and $v_{2}$, each of which is perpendicular to a side, the
rhombus is a $(v_{1},v_{2})$-maximum circle having center $C$ and radius $ef/\sqrt{e^{2}+f^{2}}$.
\end{proof}

\begin{center}
\includegraphics[width=1.7 in]{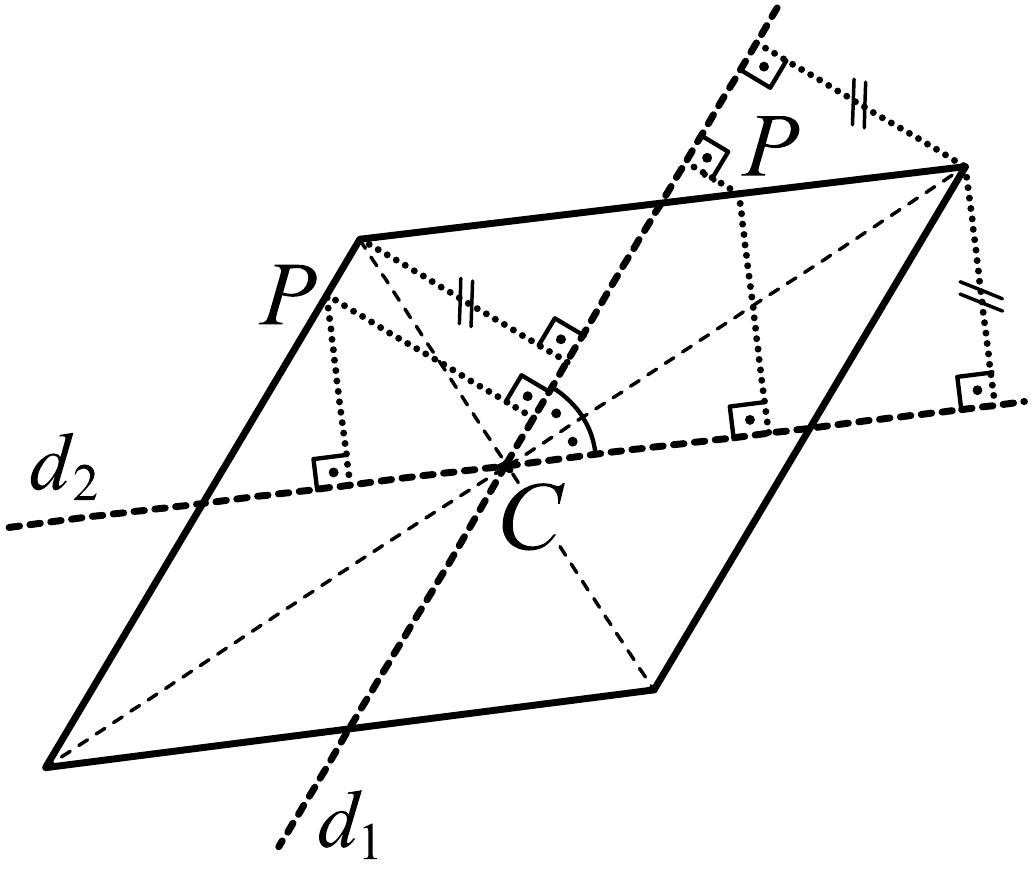} \vspace{-0.1in}
\end{center}
\begin{center}
\textbf{Figure 9.} A rhombus with center $C$ and
diagonals of lengths $2e$ and $2f$.
\end{center}

\section{Circles of the generalized Euclidean metric in $\mathbb{R}^{2}$}

\noindent By Definition 2.1 and Remark 2.1, $(v_{1},v_{2})$-Euclidean
distance between points $P_{1}=(x_{1},y_{1})$ and $P_{2}=(x_{2},y_{2})$ in $%
\mathbb{R}^{2}$ is%
\begin{eqnarray*}
d_{E(v_{1},v_{2})}(P_{1},P_{2}) &=&\left[ (\lambda _{1}\left\vert
v_{11}(x_{1}-x_{2})+v_{12}(y_{1}-y_{2})\right\vert )^{2}+(\lambda
_{2}\left\vert v_{21}(x_{1}-x_{2})+v_{22}(y_{1}-y_{2})\right\vert )^{2}%
\right] ^{1/2} \\
&=&\left[ (\lambda _{1}\,d_{E}(P_{2},l_{1}))^{2}+(\lambda
_{2}\,d_{E}(P_{2},l_{2}))^{2}\right] ^{1/2}
\end{eqnarray*}%
that is the square root of the sum of square of weighted Euclidean distances
from the points $P_{2}$ to the lines $l_{1}$ and $l_{2}$, which are passing
through$\ P_{1}$ and perpendicular to the vectors $v_{1}$ and $v_{2}$
respectively. Notice that by Pythagorean theorem, for $\lambda _{1}=\lambda
_{2}=1$ and perpendicular unit vectors $v_{1}$ and $v_{2}$, we have 
\begin{equation}
d_{E(v_{1},v_{2})}(P_{1},P_{2})=d_{E}(P_{1},P_{2}).
\end{equation}%
\noindent The following theorem determines circles of the
generalized Euclidean metric $d_{E(v_{1},v_{2})}$ in $\mathbb{R}^{2}$:

\begin{theorem}
Every $(v_{1},v_{2})$-Euclidean circle is an ellipse with the same center.
In addition, if $\lambda _{1}=\lambda _{2}$ then its axes are angle
bisectors of the lines $l_{1}$ and $l_{2}$, if $v_{1}\perp v_{2}$ then its
axes are the lines $l_{1}$ and $l_{2}$, and if $\lambda _{1}=\lambda _{2}$
and $v_{1}\perp v_{2}$ then it is a Euclidean circle with the same center.
\end{theorem}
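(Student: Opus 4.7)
The plan is to reduce (by translation invariance, Corollary 2.3) to the unit $(v_1,v_2)$-Euclidean circle centered at the origin, whose defining equation is
\[
\lambda_1^2 (v_{11}x+v_{12}y)^2 + \lambda_2^2 (v_{21}x+v_{22}y)^2 = 1.
\]
The first claim then follows by observing that this locus is the image of the standard unit Euclidean circle $u^2 + w^2 = 1$ under the same invertible linear transformation $T$ used in the proofs of Theorems 3.1 and 4.1; invertibility comes from $\tau \neq 0$ (linear independence of $v_1,v_2$) and $\lambda_i > 0$, and the image of a Euclidean circle under an invertible linear map is an ellipse centered at the origin.

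To identify the axes, I would pass to the symmetric matrix $M = \lambda_1^2\, v_1 v_1^T + \lambda_2^2\, v_2 v_2^T$ of the quadratic form and diagonalize it in each special case, since the principal axes of the conic $\mathbf{x}^T M \mathbf{x} = 1$ lie along the eigenvectors of $M$. For $\lambda_1 = \lambda_2 =: \lambda$, a short calculation using $\|v_i\| = 1$ gives
\[
M(v_1 \pm v_2) = \lambda^2 \,(1 \pm v_1 \cdot v_2)\,(v_1 \pm v_2),
\]
so the eigendirections are $v_1 + v_2$ and $v_1 - v_2$. These two directions determine the angle bisectors of the lines $\mathrm{span}(v_1)$ and $\mathrm{span}(v_2)$, and because $l_1, l_2$ are obtained from those lines by a common $90^\circ$ rotation they share the same (perpendicular) pair of bisectors, so the axes of the ellipse are the angle bisectors of $l_1$ and $l_2$.

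For $v_1 \perp v_2$, the outer-product form of $M$ directly yields $M v_i = \lambda_i^2 v_i$, so the axes run along $v_1$ and $v_2$; but in this configuration $v_1 \parallel l_2$ and $v_2 \parallel l_1$, so the axes are precisely the lines $l_1$ and $l_2$. When both hypotheses hold simultaneously, $\{v_1,v_2\}$ is orthonormal and $M = \lambda^2(v_1 v_1^T + v_2 v_2^T) = \lambda^2 I$, collapsing the equation to $x^2 + y^2 = 1/\lambda^2$, a Euclidean circle of radius $1/\lambda$ centered at $C$. I expect the only delicate step to be the geometric identification of the eigendirections $v_1 \pm v_2$ with the angle bisectors of $l_1, l_2$; everything else reduces to either the linear change of variables or a one-line $2\times 2$ eigenvector computation.
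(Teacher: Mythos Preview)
Your argument is correct, but it follows a different route from the paper's. The paper expands the defining equation into the general quadratic form $Ax^{2}+By^{2}+2Cxy+F=0$, computes the invariants $\delta$ and $\Delta$, and invokes the classical discriminant classification of conics to conclude that the locus is an ellipse. For the special cases it argues purely by symmetry: when $\lambda_{1}=\lambda_{2}$ the defining quantity $d_{E}(P,l_{1})^{2}+d_{E}(P,l_{2})^{2}$ is invariant under reflection in the angle bisectors of $l_{1},l_{2}$; when $v_{1}\perp v_{2}$ it is invariant under reflection in $l_{1}$ and $l_{2}$ themselves; and an ellipse with four distinct axes of symmetry must be a Euclidean circle.

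Your linear-algebraic route---pulling back to the standard unit circle via the map $T$ of Theorems~3.1 and~4.1, and then diagonalizing $M=\lambda_{1}^{2}v_{1}v_{1}^{T}+\lambda_{2}^{2}v_{2}v_{2}^{T}$---is equally valid and arguably cleaner: it bypasses the discriminant bookkeeping, and the eigenvalues $\lambda^{2}(1\pm v_{1}\!\cdot\! v_{2})$ (resp.\ $\lambda_{1}^{2},\lambda_{2}^{2}$) give the semi-axis lengths for free. The one step worth stating more explicitly is why the eigendirections $v_{1}\pm v_{2}$ coincide with the bisectors of $l_{1},l_{2}$: you use that $(v_{1}+v_{2})\perp(v_{1}-v_{2})$ (since $\|v_{1}\|=\|v_{2}\|$), so rotating this perpendicular pair by $90^{\circ}$ returns the same pair of lines; hence the bisectors of $\mathrm{span}(v_{1}),\mathrm{span}(v_{2})$ and of $l_{1},l_{2}$ agree. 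That is correct, and once made explicit the argument is complete. The paper's symmetry argument trades this small computation for a more synthetic viewpoint but yields less quantitative information.
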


\begin{proof}
Without loss of generality, we consider the unit $(v_{1},v_{2})$-Euclidean
circle with center at the origin. Clearly, it is the set of points $P=(x,y)$
in $\mathbb{R}^{2}$ satisfying the equation 
\begin{equation}
d_{E(v_{1},v_{2})}(O,P)=\left[ (\lambda _{1}\,d_{E}(P,l_{1}))^{2}+(\lambda
_{2}\,d_{E}(P,l_{2}))^{2}\right] ^{1/2}=1
\end{equation}%
where $l_{i}:v_{i1}x+v_{i2}y=0$ for $i=1,2$, that is
\begin{equation}
\lambda _{1}^{2}\left( v_{11}x+v_{12}y\right) ^{2}+\lambda _{2}^{2}\left(
v_{21}x+v_{22}y\right) ^{2}=1.
\end{equation}%
\noindent This equation can be written as
\begin{equation}
Ax^{2}+By^{2}+2Cxy+2Dx+2Ey+F=0
\end{equation}
where $A=\lambda _{1}^{2}v_{11}^{2}+\lambda _{2}^{2}v_{21}^{2}$, $B=\lambda
_{1}^{2}v_{12}^{2}+\lambda _{2}^{2}v_{22}^{2}$, $C=\lambda
_{1}^{2}v_{11}v_{12}+\lambda _{2}^{2}v_{21}v_{22}$, $D=E=0$ and $F=-1$.\smallskip \newline
If we use the classification conditions for the general quadratic equations
in two variables (see \cite[pp. 232-233]{Zwil}), we have%
\begin{equation*}
\delta =\left\vert 
\begin{array}{cc}
A & C \\ 
C & B%
\end{array}%
\right\vert =\lambda _{1}^{2}\lambda _{2}^{2}\tau ^{2}\text{ \ and \ }\Delta
=\left\vert 
\begin{array}{ccc}
A & C & 0 \\ 
C & B & 0 \\ 
0 & 0 & -1%
\end{array}%
\right\vert =-\delta
\end{equation*}%
where $\tau =\left\vert 
\begin{array}{cc}
v_{11} & v_{21} \\ 
v_{12} & v_{22}%
\end{array}%
\right\vert $, and since\ $v_{1}$ and $v_{2}$ are linearly independent, we
get $\tau \neq 0$, $\delta >0$ and $\Delta <0$. In addition, since $A>0$ and 
$B>0$, we get $\Delta /(A+B)<0$. So, since 
\begin{equation*}
\Delta \neq 0,\delta >0\text{ \ and \ }\Delta /(A+B)<0,
\end{equation*}%
the quadratic equation determines an ellipse with center at the origin. If 
$\lambda _{1}=\lambda _{2}$, concerning the equation (28) geometrically, one
can see that the unit $(v_{1},v_{2})$-Euclidean circle is symmetric about
angle bisectors of the lines $l_{1}$ and $l_{2}$, since $l_{1}$ and $l_{2}$
are symmetric about the angle bisectors of themselves. Notice that the major
axis of the ellipse is the angle bisector of the non-obtuse angle between 
$l_{1}$ and $l_{2}$. If $v_{1}\perp v_{2}$ then $l_{1}\perp l_{2}$, and
concerning the equation (28) geometrically again, one can see that the unit 
$(v_{1},v_{2})$-Euclidean circle is symmetric about the lines $l_{1}$ and 
$l_{2}$. Finally, it is clear that if $\lambda _{1}=\lambda _{2}$ and 
$v_{1}\perp v_{2}$, then the unit $(v_{1},v_{2})$-Euclidean circle is
Euclidean circle with the same center, since an ellipse which is symmetric
about four different lines ($l_{1}$, $l_{2}$ and angle bisectors of them),
is a Euclidean circle. One can also see that if $\lambda _{1}=\lambda _{2}$
and $v_{1}\perp v_{2}$, then $A=B>0$ and $C=0$, so the quadratic equation
above gives an equation of a Euclidean circle with center at the origin (see
Figure 10 and Figure 11 for examples of the unit $(v_{1},v_{2})$-Euclidean
circles).\medskip
\end{proof}

\begin{center}
\includegraphics[width=4.2 in]{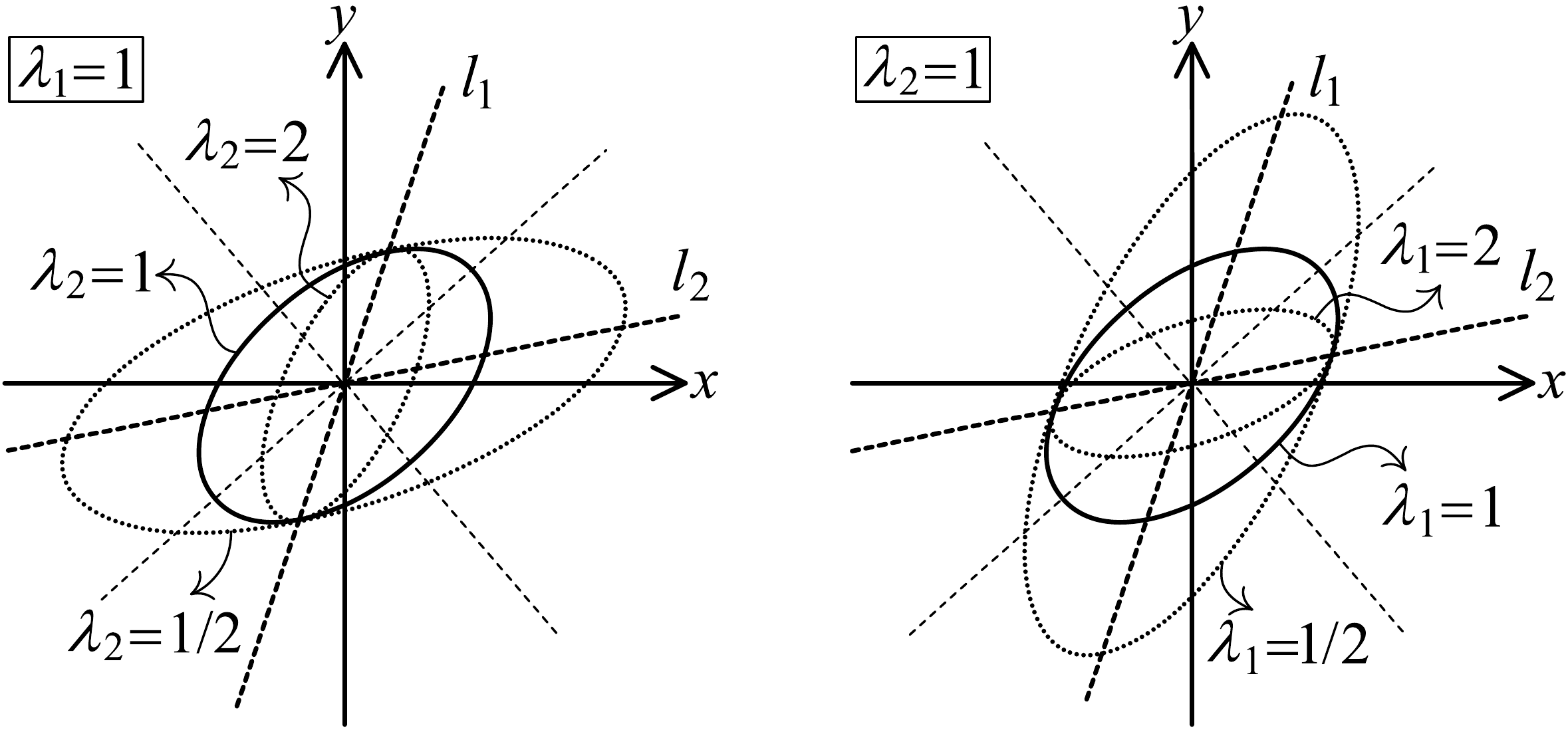} \vspace{-0.1in}
\end{center}
\begin{center}
\textbf{Figure 10.} The unit $(v_{1},v_{2})$-Euclidean circles for 
$v_{1}=\left( \frac{3}{\sqrt{10}},\frac{-1}{\sqrt{10}}\right) $, 
$v_{2}=\left( \frac{-1}{\sqrt{26}},\frac{5}{\sqrt{26}}\right) $.\smallskip
\end{center}

\begin{center}
\includegraphics[width=4.2 in]{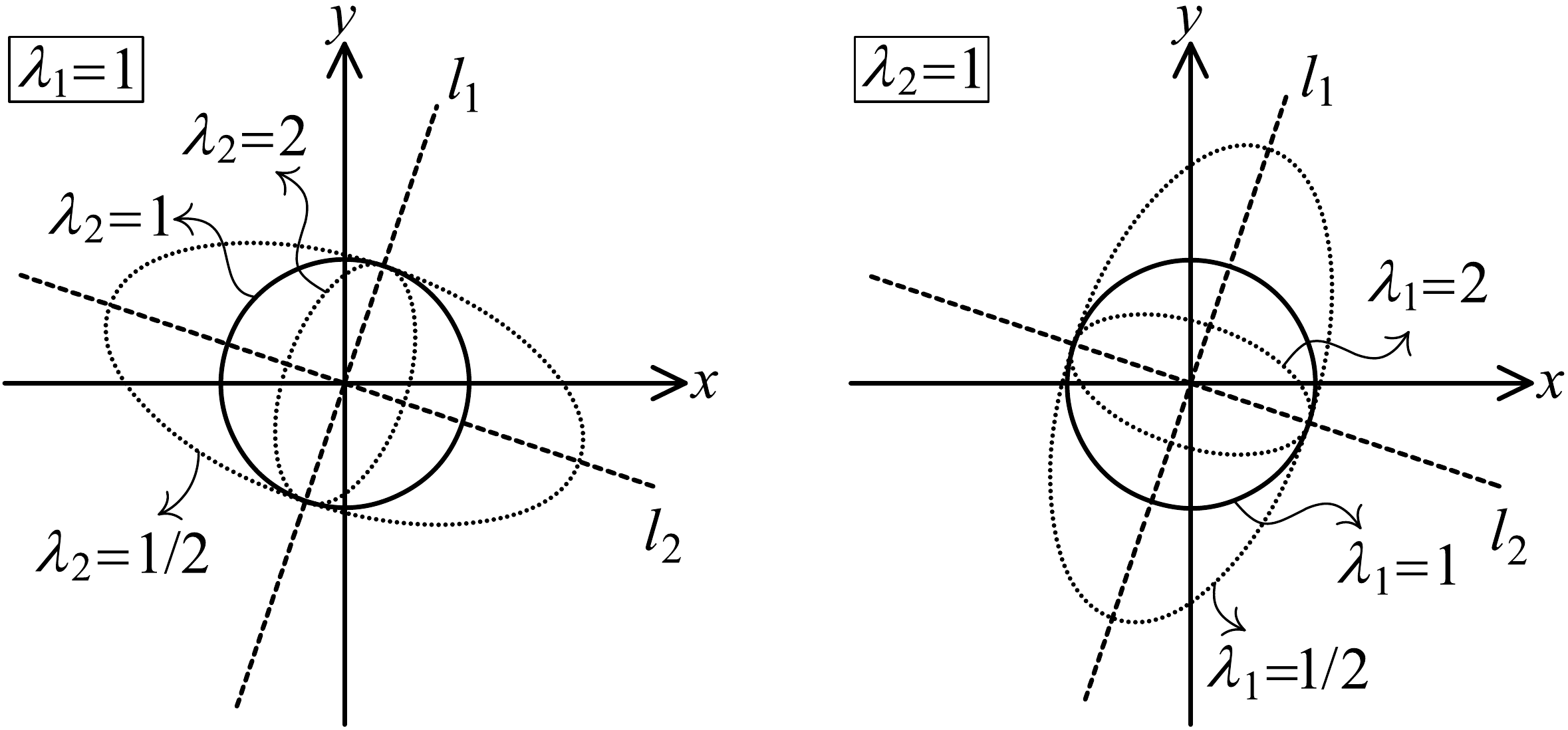} \vspace{-0.1in}
\end{center}
\begin{center}
\textbf{Figure 11.} The unit $(v_{1},v_{2})$-Euclidean circles\ for 
$v_{1}=\left( \frac{3}{\sqrt{10}},\frac{-1}{\sqrt{10}}\right) $, 
$v_{2}=\left( \frac{1}{\sqrt{10}},\frac{3}{\sqrt{10}}\right) $.\smallskip
\end{center}

\noindent Let us consider the case of $\lambda _{1}=\lambda _{2}=1$: Now, we
know that a $(v_{1},v_{2})$-Euclidean circle with center $C$ and radius $r$,
that is the set of all points $P$ satisfying the equation 
\begin{equation*}
\left[ (d_{E}(P,l_{1}))^{2}+(d_{E}(P,l_{2}))^{2}\right] ^{1/2}=r,
\end{equation*}%
is an ellipse with the same center, whose axes are angle bisectors of the
lines $l_{1}$ and $l_{2}$, such that the major axis is the angle bisector of
the non-obtuse angle between $l_{1}$ and $l_{2}$. In addition, if 
$v_{1}\perp v_{2}$ then $(v_{1},v_{2})$-Euclidean circle is a Euclidean
circle having the same center and the radius. In addition, for a point 
$Q_{i} $ on both line $l_{i}$ and the $(v_{1},v_{2})$-Euclidean circle (see
Figure 12), it is clear that 
\begin{equation*}
d_{E}(Q_{1},l_{2})=d_{E}(Q_{2},l_{1})=r.
\end{equation*}

\begin{center}
\includegraphics[width=3.5 in]{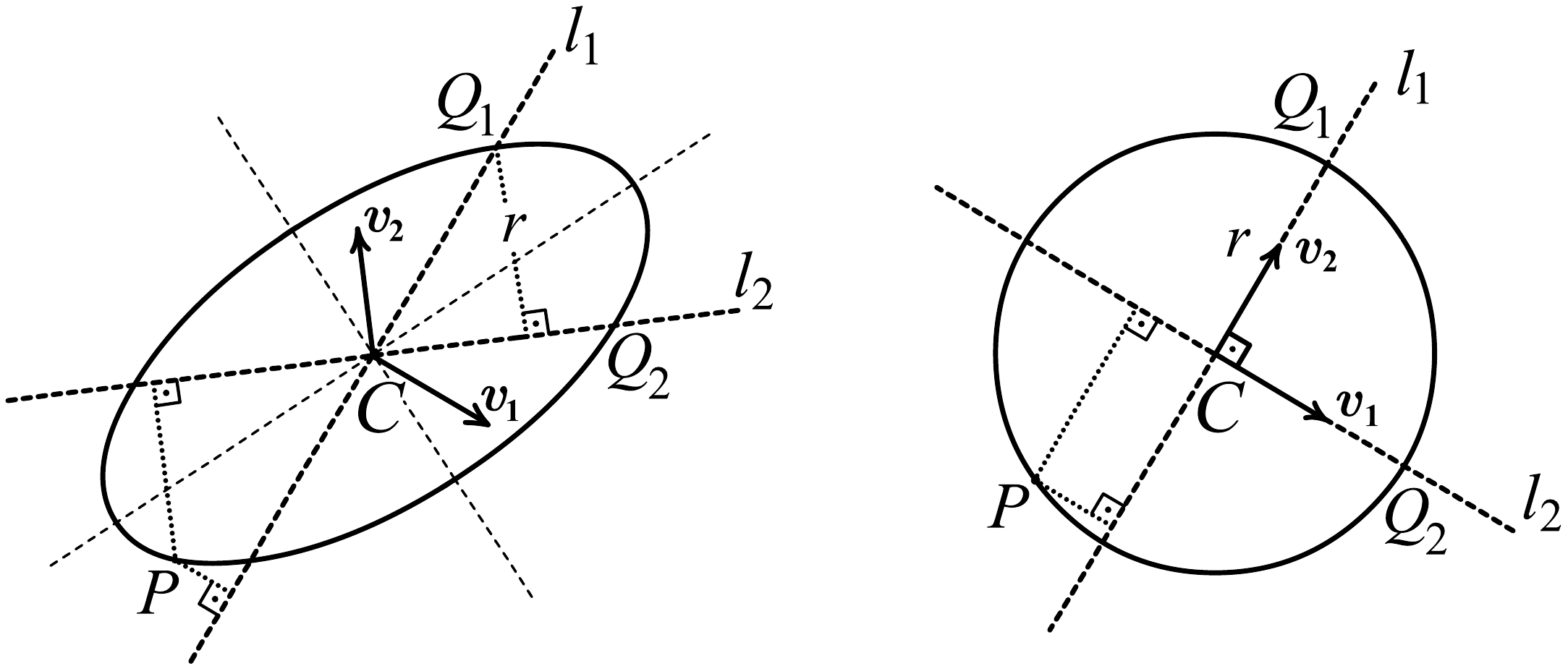} \vspace{-0.1in}
\end{center}
\begin{center}
\textbf{Figure 12.} $(v_{1},v_{2})$-Euclidean circles with center $C$ and radius $r$, for $\lambda _{1}=\lambda _{2}=1$. \smallskip 
\end{center}

\noindent The following theorem determines some relations between parameters
of a $(v_{1},v_{2})$-Euclidean circle and the ellipse related to it:

\begin{theorem}
If a $(v_{1},v_{2})$-Euclidean circle with radius $r$ for $\lambda
_{1}=\lambda _{2}=1$, is an ellipse with the same center, having semi-major
axis $a$ and semi-minor axis $b$, then\vspace{-0.07in} 
\begin{equation*}
r=\frac{\sqrt{2}ab}{\sqrt{a^{2}+b^{2}}}\text{, \ }a=\frac{r}{\sqrt{1-\cos
\theta }}\text{ \ and \ }b=\frac{r}{\sqrt{1+\cos \theta }}
\end{equation*}%
where $\theta $ is the non-obtuse angle between\ $v_{1}$ and $v_{2}$, and 
$\cos \theta =\left\vert v_{11}v_{21}+v_{12}v_{22}\right\vert .$
\end{theorem}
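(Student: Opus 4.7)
The plan is to apply Theorem 5.1 and reduce to a direct coordinate computation in a well-chosen Euclidean frame. By Theorem 5.1 (for $\lambda_1=\lambda_2=1$), the $(v_1,v_2)$-Euclidean circle of radius $r$ is an ellipse whose axes are the angle bisectors of $l_1$ and $l_2$, with the major axis along the bisector of the non-obtuse angle between them. Using translation invariance (Corollary 2.3), I place the center at the origin, and because the conclusion is a relation among the scalar invariants $a$, $b$, $r$, $\theta$, I may rotate the Euclidean frame so that the major axis is the $x$-axis and the minor axis is the $y$-axis.

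In this frame, since $l_i\perp v_i$, the non-obtuse angle between $l_1$ and $l_2$ equals $\theta$. Setting $\alpha=\theta/2\in[0,\pi/4]$, the lines $l_1,l_2$ are symmetric about both coordinate axes and each makes angle $\alpha$ with the $x$-axis, so up to signs I may take $v_1=(-\sin\alpha,\cos\alpha)$ and $v_2=(\sin\alpha,\cos\alpha)$. The defining relation $(d_E(P,l_1))^2+(d_E(P,l_2))^2=r^2$ for $P=(x,y)$ then reads
\[
(-\sin\alpha\,x+\cos\alpha\,y)^2+(\sin\alpha\,x+\cos\alpha\,y)^2=r^2,
\]
and expanding (the cross terms cancel) gives $2\sin^2\alpha\,x^2+2\cos^2\alpha\,y^2=r^2$. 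Applying the half-angle identities $2\sin^2(\theta/2)=1-\cos\theta$ and $2\cos^2(\theta/2)=1+\cos\theta$, this becomes $(1-\cos\theta)x^2+(1+\cos\theta)y^2=r^2$.

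Comparing with $x^2/a^2+y^2/b^2=1$ and noting that $\cos\theta\ge 0$ forces $1-\cos\theta\le 1+\cos\theta$ (so the longer semi-axis is indeed along the $x$-axis, confirming the identification of the major axis), I read off $a=r/\sqrt{1-\cos\theta}$ and $b=r/\sqrt{1+\cos\theta}$. Finally, $1/a^2+1/b^2=2/r^2$ rearranges to $r=\sqrt{2}\,ab/\sqrt{a^2+b^2}$, while the identity $\cos\theta=|v_{11}v_{21}+v_{12}v_{22}|$ is immediate from $v_1\cdot v_2=v_{11}v_{21}+v_{12}v_{22}$ together with the non-obtuse convention. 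There is no real obstacle once the frame is chosen; the only delicate point is verifying that the $x$-axis (and not the $y$-axis) carries the semi-major axis, which is exactly what the non-obtuse convention on $\theta$ provides.
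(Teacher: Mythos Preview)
Your argument is correct, and the route is genuinely different from the paper's. The paper argues synthetically from a figure: it looks at the points $Q_i$ where $l_i$ meets the ellipse (so that $d_E(Q_1,l_2)=d_E(Q_2,l_1)=r$) and reads off from similar right triangles with hypotenuses $a$ and $b$ the relations $\sin\tfrac{\theta}{2}=\tfrac{r}{a\sqrt{2}}$ and $\cos\tfrac{\theta}{2}=\tfrac{r}{b\sqrt{2}}$, then manipulates these to obtain the stated formulas. You instead choose coordinates aligned with the ellipse's axes, write $v_1,v_2$ explicitly in terms of $\alpha=\theta/2$, expand the defining equation to $(1-\cos\theta)x^2+(1+\cos\theta)y^2=r^2$, and compare directly with $x^2/a^2+y^2/b^2=1$. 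Your approach has the advantage of being entirely self-contained and not relying on a figure or on identifying the right pair of triangles; the paper's approach, on the other hand, yields along the way several auxiliary identities (e.g.\ $\tan\tfrac{\theta}{2}=b/a$, $\sin\theta=r^2/(ab)$) that feed into the subsequent discussion of conjugate diameters and the circumscribed rectangle and rhombus. Both are short and elementary; your coordinate reduction is arguably the cleaner derivation of the three stated relations, while the paper's geometric viewpoint integrates more naturally with the rest of Section~5.
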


\begin{proof}
Let a $(v_{1},v_{2})$-Euclidean circle with radius $r$ for $\lambda
_{1}=\lambda _{2}=1$, be an ellipse with the same center, having semi-major
axis $a$ and semi-minor axis $b$, and let $\theta $ be the non-obtuse angle
between\ $v_{1}$ and $v_{2}$. Then non-obtuse angle between the lines $l_{1}$
and $l_{2}$ is equal to $\theta $, and the axes of the ellipse is angle
bisectors of the lines $l_{1}$ and $l_{2}$, such that the major axis of is
the angle bisector of the non-obtuse angle between $l_{1}$ and $l_{2}$.
Using similar right triangles whose hypotenuses are $a$ and $b$ (see Figure
13), one gets $\sin \frac{\theta }{2}=\frac{r}{a\sqrt{2}}$, 
$\cos \frac{\theta }{2}=\frac{r}{b\sqrt{2}}$, 
$\tan \frac{\theta }{2}=\frac{r}{\sqrt{2a^{2}-r^{2}}}=\frac{\sqrt{2b^{2}-r^{2}}}{r}$, 
and so
\begin{equation*}
\tan \frac{\theta }{2}=\frac{b}{a}\text{, }\sin \theta =\frac{r^{2}}{ab}%
\text{ and \ }\cos \theta =1-\frac{r^{2}}{a^{2}}=\frac{r^{2}}{b^{2}}-1.
\end{equation*}
Then, we have
\begin{equation*}
r=\frac{\sqrt{2}ab}{\sqrt{a^{2}+b^{2}}}\text{, }a=\frac{r}{\sqrt{1-\cos
\theta }}\text{ \ and \ }b=\frac{r}{\sqrt{1+\cos \theta }}.
\end{equation*}
Besides, one can derive that
\begin{equation*}
\sin \theta =\frac{2ab}{\sqrt{a^{2}+b^{2}}}\text{,\ }\cos \theta =\frac{%
a^{2}-b^{2}}{a^{2}+b^{2}}\text{ and }\tan \theta =\frac{2ab}{a^{2}-b^{2}}.
\end{equation*}
In addition, by $\left\vert \left\langle v_{1},v_{2}\right\rangle
\right\vert =\left\Vert v_{1}\right\Vert \left\Vert v_{2}\right\Vert \cos
\theta $ it follows immediately that 
\begin{equation*}
\cos \theta =\left\vert v_{11}v_{21}+v_{12}v_{22}\right\vert \text{ \ and \ }%
\sin \theta =\left\vert v_{11}v_{22}-v_{12}v_{21}\right\vert .
\end{equation*}
\end{proof}

\begin{center}
\includegraphics[width=2.1 in]{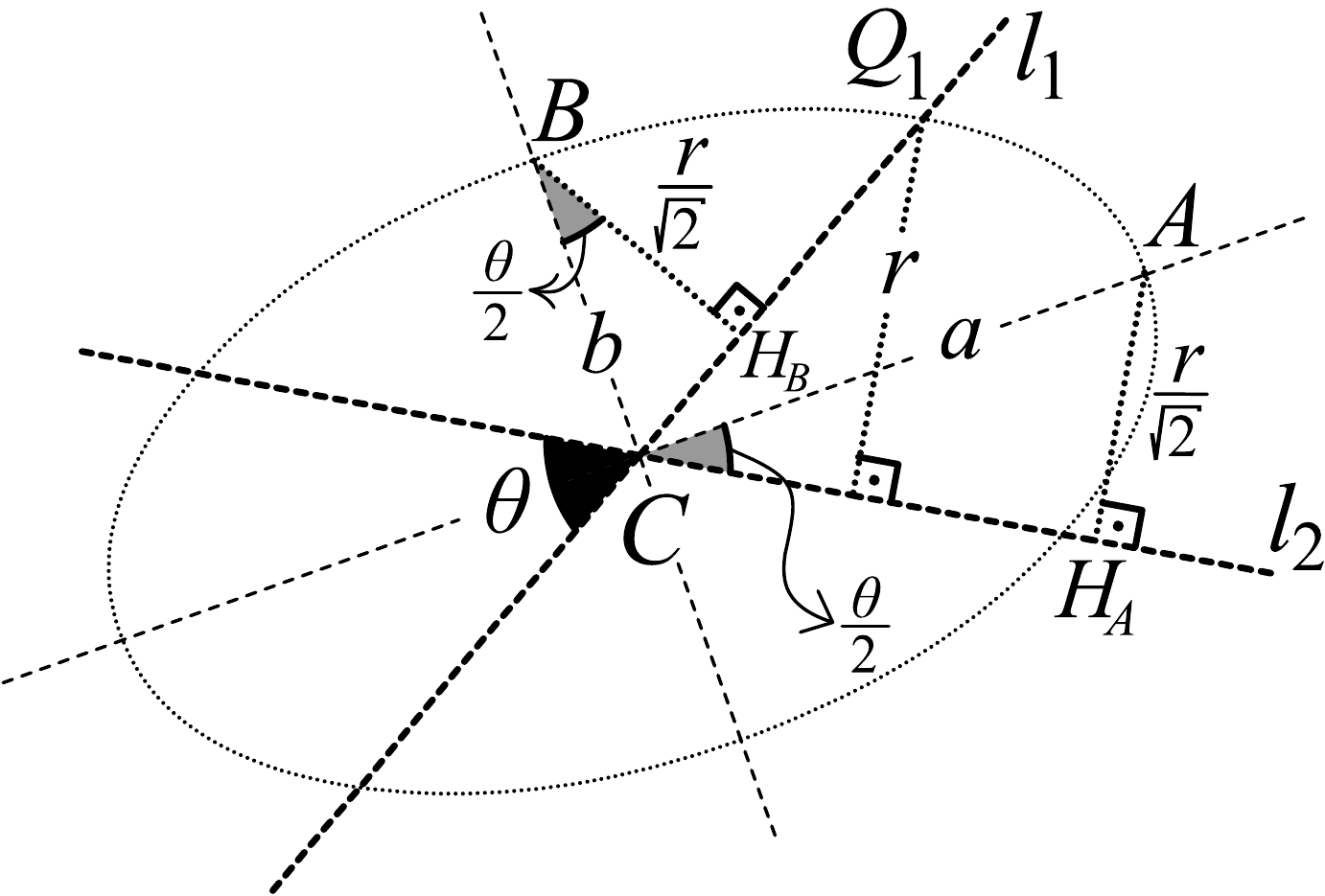} \vspace{-0.2in}
\end{center}
\begin{center}
\textbf{Figure 13.} A $(v_{1},v_{2})$-Euclidean circle and an ellipse that are the same.   
\end{center} \vspace{-0.05in}

\noindent Notice that $r^{2}$ is the harmonic mean of $b^{2}$ and $a^{2}$,
so we have $b\leq r\leq a$ where the equality holds only for the case $a=b=r$. 
Another fact is the chords derived by the lines $l_{1}$ and $l_{2}$ have
the same length, and if $d_{E}(C,Q_{i})=R$ then $\sin \theta =\frac{r}{R}$,
and we get  \vspace{-0.05in}
\begin{equation*}
R=\sqrt{a^{2}+b^{2}}/\sqrt{2}\text{ \ and \ }Rr=ab. \vspace{-0.05in}
\end{equation*}
Since chords derived by the lines $l_{1}$ and $l_{2}$ are conjugate
diameters by the following theorem, the last two equalities can also be
derived by the first and the second theorems of Appollonius; which are 
\newline
(1) The sum of the squares of any two conjugate semi-diameters is equal to $a^{2}+b^{2}$, \newline
(2) The area of the parallelogram determined by two coterminous conjugate
semi-diameters is equal to $ab$ (see \cite[pp. 1800-1803]{Mccartin}). \vspace{-0.05in}

\begin{theorem}
The chords derived $l_{1}$ and $l_{2}$ are conjugate diameters
of the ellipse. \vspace{-0.1in}
\end{theorem}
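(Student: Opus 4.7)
The plan is to exploit that conjugacy of diameters of a central conic is an affine invariant, and to deform the $(v_1,v_2)$-Euclidean ellipse into a Euclidean circle under which the images of $l_1$ and $l_2$ are manifestly perpendicular, hence trivially conjugate, diameters.

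First, I would introduce the invertible linear map $\varphi(x,y) = \bigl(\lambda_1(v_{11}x+v_{12}y),\,\lambda_2(v_{21}x+v_{22}y)\bigr)$. Its determinant is $\lambda_1\lambda_2\tau\ne 0$ by the linear independence of $v_1,v_2$ and positivity of the weights. Substituting $u = \lambda_1(v_{11}x+v_{12}y)$ and $w = \lambda_2(v_{21}x+v_{22}y)$ in equation (29) shows that $\varphi$ maps the unit $(v_1,v_2)$-Euclidean circle bijectively onto the standard Euclidean unit circle $u^2+w^2=1$. At the same time, $l_1:v_{11}x+v_{12}y=0$ is sent to the coordinate axis $u=0$ and $l_2:v_{21}x+v_{22}y=0$ to the axis $w=0$; these are perpendicular diameters of the Euclidean circle, and any two perpendicular diameters of a circle are (trivially) conjugate, since either one bisects every chord orthogonal to itself.

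Second, I would transfer this conjugacy back to the ellipse via $\varphi^{-1}$. The property defining conjugate diameters---that each one bisects every chord of the conic parallel to the other---is invariant under invertible linear transformations, because such maps take lines to lines, preserve parallelism, preserve midpoints of segments, and send the conic onto its image. Thus the chords cut from the $(v_1,v_2)$-Euclidean circle by $l_1$ and $l_2$ are conjugate diameters of the ellipse, as claimed.

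The main obstacle is being careful about the affine-invariance of conjugacy; once this is in hand the proof is essentially one line. A more computational alternative, in the spirit of the paper, is to verify directly the algebraic condition for conjugacy: writing (29) as $\mathbf{x}^T M \mathbf{x}=1$ with $M=\lambda_1^2\, v_1 v_1^T+\lambda_2^2\, v_2 v_2^T$, and choosing direction vectors $u_i$ for $l_i$ satisfying $v_i\cdot u_i=0$, the expression $u_1^T M u_2 = \lambda_1^2(v_1\cdot u_1)(v_1\cdot u_2)+\lambda_2^2(v_2\cdot u_1)(v_2\cdot u_2)$ collapses to zero since the first and fourth factors vanish, which is precisely the conjugate-diameter condition.
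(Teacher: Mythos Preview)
Your proof is correct, and it follows a genuinely different route from the paper's.

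The paper argues geometrically, invoking the classical fact (cited from McCartin) that diameters parallel to any pair of \emph{supplemental chords}---chords joining the two extremities of some diameter to a single point of the ellipse---are conjugate. From Theorem~5.2 one has $\tan\tfrac{\theta}{2}=b/a$, so in the frame of the ellipse's axes the lines $l_1$ and $l_2$ have slopes $\pm b/a$; these are exactly the directions of the chords joining the endpoints $(0,\pm b)$ of the minor axis to the vertex $(a,0)$ of the major axis. Hence $l_1$ and $l_2$ are parallel to a pair of supplemental chords and therefore cut conjugate diameters.

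Your approach bypasses both the supplemental-chord criterion and the angle identity: pushing the ellipse forward to the standard circle via $\varphi$ sends $l_1,l_2$ to the coordinate axes, and affine invariance of conjugacy finishes the argument. Your alternative matrix computation $u_1^{T}Mu_2=0$ is the shortest possible route. Two things your method buys: it is self-contained (no appeal to an external theorem on supplemental chords) and it applies verbatim for arbitrary weights $\lambda_1,\lambda_2$, whereas the paper's geometric argument is tied to the $\lambda_1=\lambda_2=1$ setting of Theorem~5.2 through the relation $\tan\tfrac{\theta}{2}=b/a$. Conversely, the paper's argument situates the result within classical ellipse geometry and makes the connection to the eccentrices' placement on the bounding rectangle's diagonals visually transparent.
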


\begin{proof}
We know that the diameters parallel to any pair of supplemental chords
(which are formed by joining the extremities of any diameter to a point
lying on the ellipse) are conjugate (see \cite[p. 1805]{Mccartin}). Since 
$l_{1}$ and $l_{2}$ are parallel to a pair of supplemental chords formed by
joining the extremities of the minor axis to one of the extremities of the
major axis, the chords derived by the lines $l_{1}$ and $l_{2}$ are
conjugate diameters of the ellipse. \vspace{-0.05in}
\end{proof}

\begin{remark}
Since the chords derived by the lines $l_{1}$ and $l_{2}$ are conjugate, 
$l_{1}$ is parallel to the tangent lines through the extremities of the chord
determined by $l_{2}$, and vice versa. It is clear that the tangent lines
through the extremities of these conjugate diameters determine a rhombus
with sides of length $2R$, circumscribed the ellipse. Since $l_{1}$ and $l_{2}$ 
are symmetric about the axes of the ellipse, the diagonals of the
rhombus are on the axes of the ellipse, and they have lengths $2\sqrt{2}a$
and $2\sqrt{2}b$. Similarly, the chords determined by the axes of the
ellipse are also conjugate, and the tangent lines through the extremities of
these conjugate diameters determine a rectangle with sides of lengths $2a$
and $2b$, circumscribed the ellipse. Since $\tan \frac{\theta }{2}=\frac{b}{a}$, 
the diagonals of this rectangle are on the lines $l_{1}$ and $l_{2}$,
and they have length $2\sqrt{2}R$ (see Figure 14). Notice that there is an
ellipse similar to the prior, through eight vertices of these rectangle and
rhombus, whose semi-major and semi-minor axes are equal to $\sqrt{2}a$ and 
$\sqrt{2}b$ respectively (see Figure 14).\vspace{-0.15in}
\end{remark}

\begin{center}
\includegraphics[width=1.9 in]{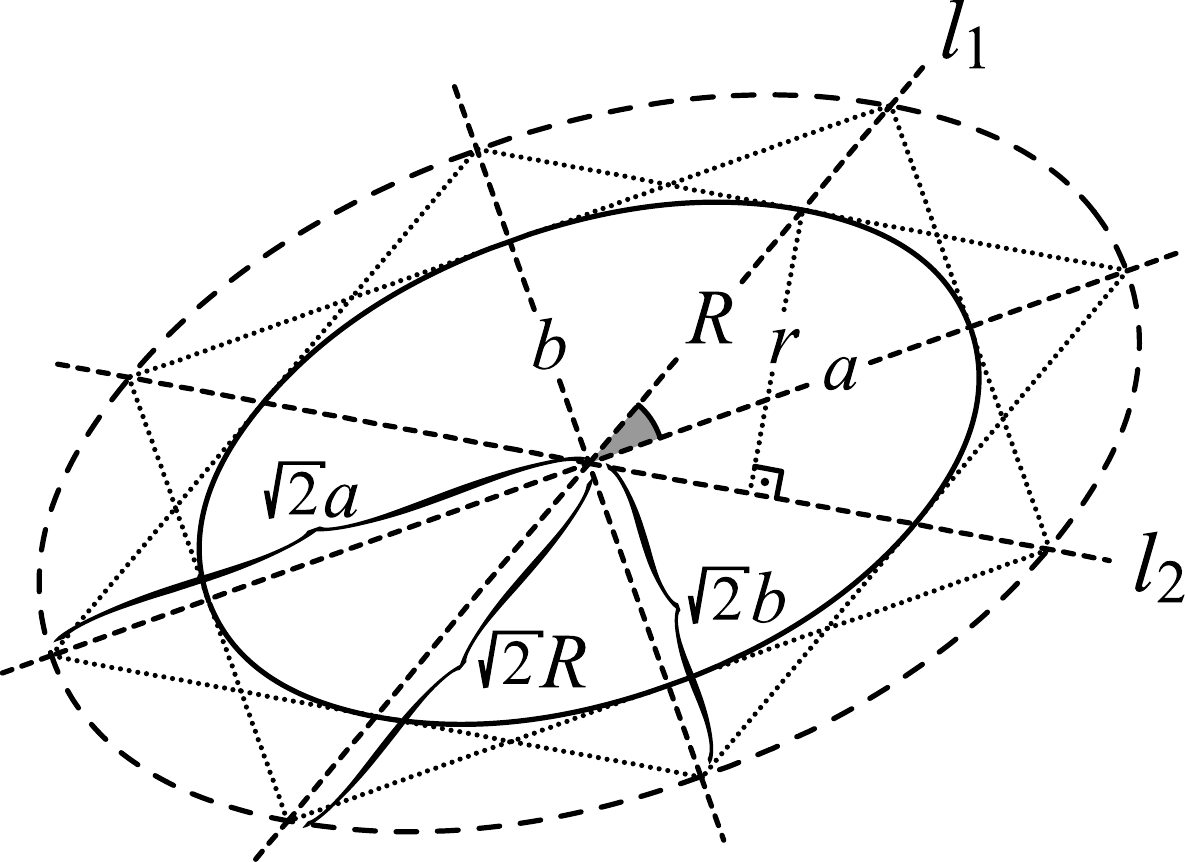} \vspace{-0.15in}
\end{center}
\begin{center}
\textbf{Figure 14.} The rectangle and the rhombus determined by an ellipse. 
\end{center}

\begin{remark}
Observe that, the rhombus derived by the tangent lines through the
extremities of the conjugate diameters determined by the lines $l_{1}$ and 
$l_{2}$, is the $(v_{1},v_{2})$-maximum circle, and the rectangle whose
vertices are the midpoints of this rhombus is the $(v_{1},v_{2})$-taxicab
circle, having the same center and radius of the $(v_{1},v_{2})$-Euclidean
circle, for $\lambda _{1}=\lambda _{2}=1$ and linearly independent unit
vectors $v_{1}$ and $v_{2}$. Figure 15 illustrates $(v_{1},v_{2})$-taxicab, 
$(v_{1},v_{2})$-Euclidean and $(v_{1},v_{2})$-maximum circles with the same
center and radius, for $\lambda _{1}=\lambda _{2}=1$ and linearly
independent unit vectors $v_{1}$and $v_{2}$, each of which is perpendicular
to one of the lines $l_{1}$ and $l_{2}$.\vspace{-0.15in}
\end{remark}

\begin{center}
\includegraphics[width=1.9 in]{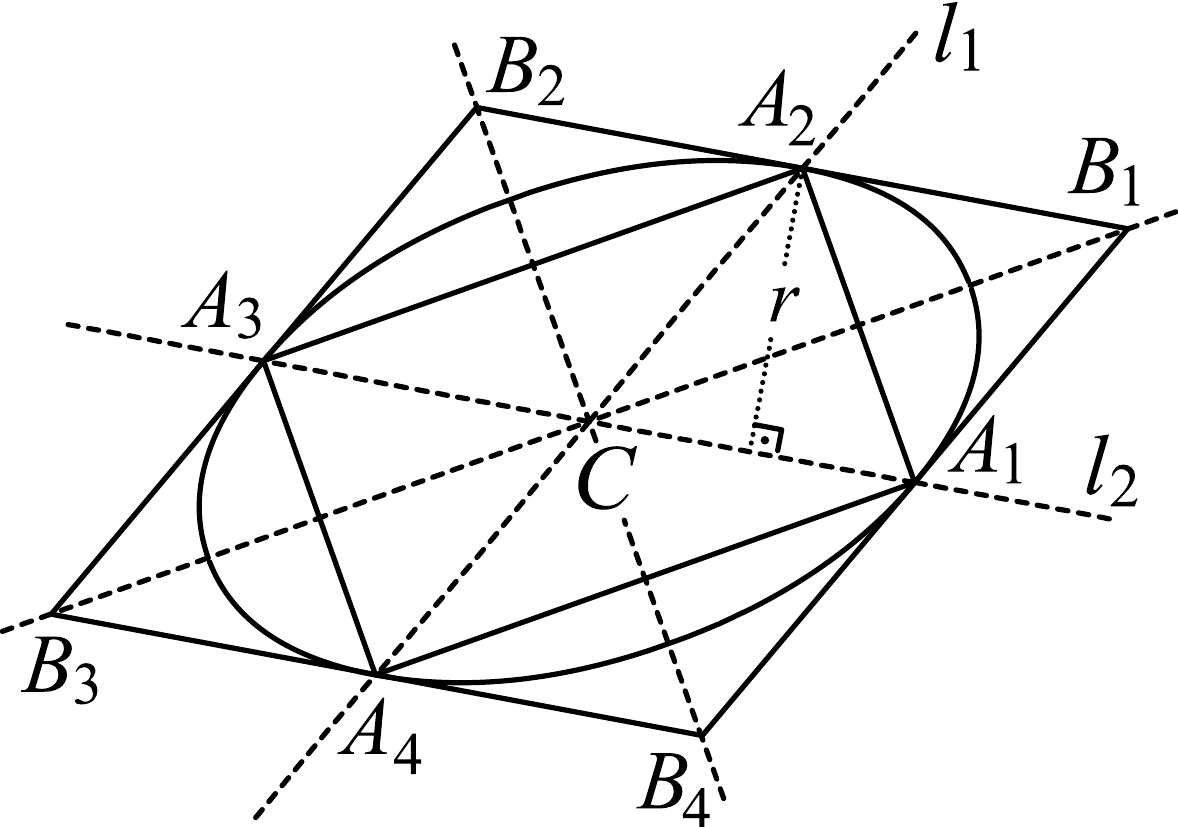} \vspace{-0.17in}
\end{center}
\begin{center}
\textbf{Figure 15.} $(v_{1},v_{2})$-Minkowski circles with the same center and radius. 
\end{center}

\noindent The following theorem shows that every ellipse is a 
$(v_{1},v_{2})$-Euclidean circle with the same center for a proper generalized Euclidean
metric $d_{E(v_{1},v_{2})}$ with $\lambda _{1}=\lambda _{2}=1$:

\begin{theorem}
Every ellipse with semi-major axis $a$ and semi-minor axis $b$, is a 
$(v_{1},v_{2})$-Euclidean circle with the same center and the radius 
$\frac{\sqrt{2}ab}{\sqrt{a^{2}+b^{2}}}$, for $\lambda _{1}=\lambda _{2}=1$ and
linearly independent unit vectors $v_{1}$ and $v_{2}$, each of which is
perpendicular to a diagonal lines of the rectangle circumscribed the
ellipse, whose sides are parallel to the axes of the ellipse.
\end{theorem}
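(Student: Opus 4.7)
The plan is to directly construct a candidate $(v_1,v_2)$-Euclidean circle from the data of the given ellipse and then invoke Theorems 5.1 and 5.2 to verify that it coincides with the ellipse. Fix coordinates so that the ellipse has center $C$ at the origin with its semi-major axis $a$ along the $x$-axis and semi-minor axis $b$ along the $y$-axis. The circumscribed rectangle then has vertices $(\pm a,\pm b)$, and its diagonals $d_1,d_2$ make angles $\pm\arctan(b/a)$ with the $x$-axis, so the non-obtuse angle between them is $\theta=2\arctan(b/a)$, giving $\tan(\theta/2)=b/a$. Choose linearly independent unit vectors $v_1,v_2$ perpendicular to $d_1,d_2$ respectively, and set $\lambda_1=\lambda_2=1$. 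Then the lines $l_i$ through $C$ perpendicular to $v_i$ are exactly $d_i$.

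Next, I would apply Theorem 5.1 to the $(v_1,v_2)$-Euclidean circle centered at $C$ with radius
\[
r \;:=\; \frac{\sqrt{2}\,ab}{\sqrt{a^{2}+b^{2}}}.
\]
That theorem guarantees this circle is itself an ellipse centered at $C$, and since $\lambda_1=\lambda_2$, its axes lie along the angle bisectors of $l_1=d_1$ and $l_2=d_2$. But the bisectors of the diagonals of the rectangle are precisely the coordinate axes, i.e.\ the axes of the original ellipse. So the candidate circle is an ellipse with the same center and the same axis directions as the given one.

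Finally, I would identify the semi-axis lengths via Theorem 5.2. Since perpendicularity preserves the non-obtuse angle between two lines, the non-obtuse angle between $v_1$ and $v_2$ is the same $\theta$ as above, so from $\tan(\theta/2)=b/a$ one obtains $\cos\theta=(a^{2}-b^{2})/(a^{2}+b^{2})$. Substituting $r$ and this value of $\cos\theta$ into the formulas
\[
a' = \frac{r}{\sqrt{1-\cos\theta}},\qquad b' = \frac{r}{\sqrt{1+\cos\theta}}
\]
from Theorem 5.2 yields $a'=a$ and $b'=b$. Two ellipses with the same center, the same axis directions, and the same semi-axis lengths are equal, so the $(v_1,v_2)$-Euclidean circle coincides with the original ellipse.

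There is no genuine obstacle here; the argument is an assembly of Theorems 5.1 and 5.2. The one item requiring small care is checking that the angle between $v_1$ and $v_2$ is indeed the angle between the diagonals of the circumscribed rectangle, and that $\tan(\theta/2)=b/a$ leads cleanly to $a'=a$, $b'=b$ after substitution; both are short computations.
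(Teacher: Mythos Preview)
Your proof is correct, but it takes a genuinely different route from the paper's. The paper proceeds by a direct computation: after placing the ellipse in standard form $\tfrac{x^{2}}{a^{2}}+\tfrac{y^{2}}{b^{2}}=1$, it writes down the diagonal lines $l_{1}:bx-ay=0$ and $l_{2}:bx+ay=0$ explicitly and, for any point $P=(x_{0},y_{0})$ on the ellipse, evaluates
\[
(d_{E}(P,l_{1}))^{2}+(d_{E}(P,l_{2}))^{2}=\frac{2(b^{2}x_{0}^{2}+a^{2}y_{0}^{2})}{a^{2}+b^{2}}=\frac{2a^{2}b^{2}}{a^{2}+b^{2}},
\]
using $b^{2}x_{0}^{2}+a^{2}y_{0}^{2}=a^{2}b^{2}$. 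This single identity shows directly that $P$ lies on the $(v_{1},v_{2})$-Euclidean circle of radius $\sqrt{2}ab/\sqrt{a^{2}+b^{2}}$. Your argument instead builds the candidate $(v_{1},v_{2})$-Euclidean circle first and then invokes Theorems~5.1 and~5.2 to recognize it as an ellipse with the same center, the same axis directions (via the angle-bisector clause), and the same semi-axis lengths (via the formulas for $a',b'$ in terms of $r$ and $\cos\theta$), concluding the two ellipses coincide. The paper's approach is shorter and entirely self-contained; yours is more structural and makes clear that Theorem~5.4 is essentially a corollary of Theorems~5.1 and~5.2, at the cost of the trigonometric bookkeeping needed to recover $\cos\theta=(a^{2}-b^{2})/(a^{2}+b^{2})$ and check $a'=a$, $b'=b$.
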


\begin{proof}
Since Euclidean distances are preserved under rigid motions, without loss of
generality, let us consider the ellipse with the equation 
\begin{equation}
\frac{x^{2}}{a^{2}}+\frac{y^{2}}{b^{2}}=1.
\end{equation}
Take the lines $l_{1}:bx-ay=0$ and $l_{2}:bx+ay=0$ passing through the
origin, since they are diagonal lines of the rectangle circumscribed the
ellipse, whose sides are parallel to the axes of the ellipse. So, for every
point $P=(x_{0},y_{0})$ on the ellipse, we have 
\begin{equation}
\lbrack (d_{E}(P,l_{1}))^{2}+(d_{E}(P,l_{2}))^{2}]^{1/2}=\left[ \frac{%
2(bx_{0}^{2}+ay_{0}^{2})}{a^{2}+b^{2}}\right] ^{1/2}=\frac{\sqrt{2}ab}{\sqrt{%
a^{2}+b^{2}}}
\end{equation}
which is a constant. Thus, for $\lambda _{1}=\lambda _{2}=1$ and linearly
independent unit vectors $v_{1}$ and $v_{2}$, each of which is perpendicular
to one of the lines $l_{1}$ and $l_{2}$, the ellipse is a 
$(v_{1},v_{2})$-Euclidean circle with center $O$ and radius $\sqrt{2}ab/\sqrt{a^{2}+b^{2}}.$
\medskip
\end{proof}

\noindent Now, by Theorem 5.1 and Theorem 5.2, for any positive real number $r$ 
and two distinct lines $l_{1}$ and $l_{2}$ intersecting at a point $C$,
every point $P$ satisfying the equation 
\begin{equation}
(d_{E}(P,l_{1}))^{2}+(d_{E}(P,l_{2}))^{2}=r^{2}
\end{equation}
is on an ellipse with the center $C$, semi-major axis $a=\frac{r}{\sqrt{%
1-\cos \theta }}$ and semi-minor axes $b=\frac{r}{\sqrt{1+\cos \theta }}$,
where $\theta $ is the non-obtuse angle between the lines $l_{1}$ and $l_{2}$, 
and the lines $l_{1}$ and $l_{2}$ are diagonal lines of the rectangle
circumscribed the ellipse, whose sides are parallel to the axes of the
ellipse. Conversely, by the Theorem 5.4, any point $P$ on this ellipse
satisfies the equation 
\begin{equation}
(d_{E}(P,l_{1}))^{2}+(d_{E}(P,l_{2}))^{2}=\frac{2\frac{r^{2}}{1-\cos \theta }%
\frac{r^{2}}{1+\cos \theta }}{\frac{r^{2}}{1-\cos \theta }+\frac{r^{2}}{%
1+\cos \theta }}=r^{2}.
\end{equation}
Clearly, for every ellipse, there are unique pair of lines $l_{1}$ and $l_{2}$, and
there is unique constant $r^{2}$ which is the square of the distance from an
intersection point of the ellipse and one of the lines $l_{1}$ and $l_{2}$
to the other one of them. Notice that we discover a new definition of the
ellipse:\medskip

\begin{definition}
In the Euclidean plane, an ellipse is a set of all points for each of which
sum of squares of its distances to \textbf{two intersecting fixed lines} is 
\textbf{constant}. We call each such fixed line an \textbf{eccentrix of the
ellipse}, and call the chord determined by an eccentrix \textbf{eccentric
diameter of the ellipse}, and half of an eccentric diameter \textbf{eccentric radius of the ellipse}.
\end{definition}

\noindent Clearly, eccentrices of an ellipse determine the eccentricity -so,
the shape- of the ellipse, and vice versa, since the eccentricity is 
\begin{equation}
e=\sqrt{1-\tfrac{b^{2}}{a^{2}}}=\sqrt{1-\tan ^{2}\tfrac{\theta }{2}}
\end{equation}%
where $\theta $ is the non-obtuse angle between\ the eccentrices. Notice
that ellipses with the same eccentrices -more generally, ellipses having the
same angle between their eccentrices- are similar, since they have the same
eccentricity (see Figure 16).

\begin{center}
\includegraphics[width=2.1 in]{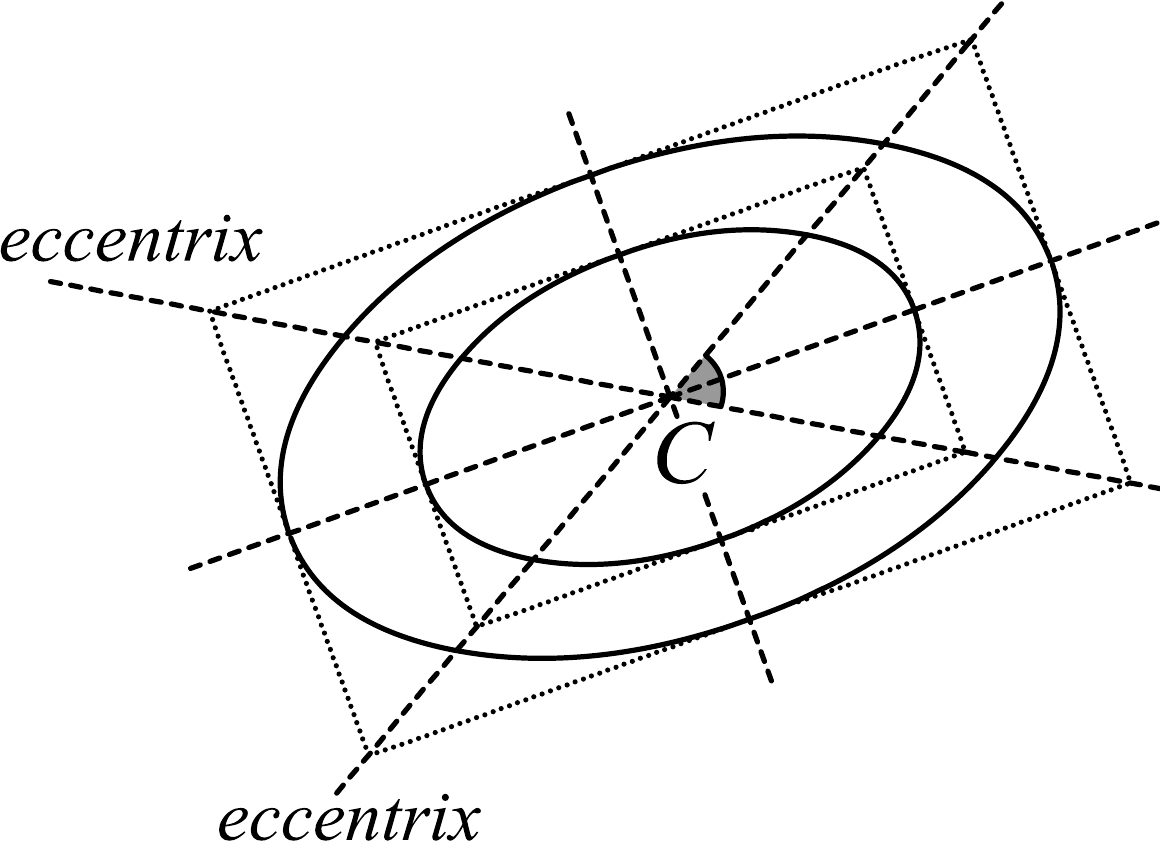}
\end{center}
\begin{center}
\textbf{Figure 16.} Ellipses with the same eccentrices.  
\end{center}

\noindent Related to this new "two-eccentrices" definition of the ellipse,
we immediately have the following fundamental conclusions:

\begin{corollary}
Given a constant $c\in \mathbb{R}^{+}$ and two fixed lines 
$l_{1}$ and $l_{2}$ intersecting at a point $C$,
having the non-obtuse angle $\theta $ between them. Then the ellipse with
constant $c$ and eccentrices $l_{1}$ and $l_{2}$ is the ellipse with the
center $C$, having semi-major axis $a=\frac{\sqrt{c}}{\sqrt{1-\cos \theta }}$
and semi-minor axis $b=\frac{\sqrt{c}}{\sqrt{1+\cos \theta }}$, such that
the major axis of the ellipse is the angle bisector of $\theta $. In
addition, this ellipse is a $(v_{1},v_{2})$-Euclidean circle with respect to
the $(v_{1},v_{2})$-Euclidean metric, for $\lambda _{1}=\lambda _{2}=1$ and
linearly independent unit vectors $v_{1}$ and $v_{2}$, each of which is
perpendicular to one of the eccentrices of the ellipse, having the center $C$
and radius $\sqrt{c}$, which is the Euclidean distance from the intersection
point of an eccentrix and the ellipse to the other eccentrix.
\end{corollary}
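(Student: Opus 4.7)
The plan is to show that the corollary is essentially a repackaging of Theorems 5.1, 5.2, and 5.4 together with the new Definition 5.1. Given the lines $l_1$, $l_2$ meeting at $C$ at non-obtuse angle $\theta$, I will first select linearly independent unit vectors $v_1$, $v_2$ perpendicular to $l_1$, $l_2$ respectively, and set $\lambda_1=\lambda_2=1$. With this choice, the lines $\Psi_C^{v_i}$ of Section 2 coincide with $l_1$ and $l_2$, so the $(v_1,v_2)$-Euclidean setup matches the geometric configuration of the two eccentrices.

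Next, I would rewrite the defining locus of Definition 5.1 in the $(v_1,v_2)$-Euclidean language. Setting $r=\sqrt{c}$, the equation
\begin{equation*}
(d_E(P,l_1))^2+(d_E(P,l_2))^2=c
\end{equation*}
is exactly the equation of the $(v_1,v_2)$-Euclidean circle with center $C$ and radius $r$, as displayed just before Theorem 5.2. Therefore the locus defined by Definition 5.1 coincides with this circle, and by Theorem 5.1 it is an ellipse centered at $C$ whose axes are the angle bisectors of $l_1$ and $l_2$, with the major axis lying along the bisector of the non-obtuse angle $\theta$.

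To extract the semi-axis lengths, I would then invoke Theorem 5.2 directly: substituting $r=\sqrt{c}$ into $a=r/\sqrt{1-\cos\theta}$ and $b=r/\sqrt{1+\cos\theta}$ yields the stated formulas. Conversely, Theorem 5.4 guarantees that starting from any ellipse with these semi-axes, the eccentrix construction recovers the same locus with the same constant $c$, so the correspondence is bijective and unambiguous. Finally, the identification of $\sqrt{c}$ with the Euclidean distance from an intersection point $Q_i\in l_i\cap E$ to the other eccentrix follows from the observation recorded after Figure 12, namely $d_E(Q_1,l_2)=d_E(Q_2,l_1)=r$, since plugging $P=Q_i$ into the defining equation forces the other distance term to equal $r$.

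I do not anticipate any real obstacle: the corollary is a synthesis statement, and every ingredient has already been proved. The only care required is bookkeeping — making sure the choice of $v_1,v_2$ perpendicular to $l_1,l_2$ is compatible with the convention $l_i=\Psi_C^{v_i}$ used throughout Section 5, and that the constant $c$ in Definition 5.1 is consistently identified with $r^2$ in Theorems 5.1, 5.2, and 5.4.
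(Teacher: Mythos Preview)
Your proposal is correct and mirrors the paper's own treatment: the corollary is presented there as an immediate consequence of the discussion preceding Definition~5.1, which invokes Theorems~5.1, 5.2, and 5.4 with $r=\sqrt{c}$, together with the observation $d_E(Q_1,l_2)=d_E(Q_2,l_1)=r$ following Figure~12. The only difference is expository---the paper places this synthesis before the definition rather than after---but the logical content is identical.
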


\begin{corollary}
Given an ellipse with center $C$, semi-major axis $a$ and\ semi-minor axis 
$b $. Then, the eccentrices of this ellipse are diagonal lines of the
rectangle circumscribed the ellipse whose sides are parallel to the axes of
the ellipse, and the constant of this ellipse\emph{\ }is $\frac{2a^{2}b^{2}}{a^{2}+b^{2}}$, 
which is the square of the Euclidean distance from the
intersection point of an eccentrix and the ellipse to the other eccentrix.
In addition, this ellipse is a $(v_{1},v_{2})$-Euclidean circle with center 
$C$ and radius $\frac{\sqrt{2}ab}{\sqrt{a^{2}+b^{2}}}$, with respect to the 
$(v_{1},v_{2})$-Euclidean metric, for $\lambda _{1}=\lambda _{2}=1$ and
linearly independent unit vectors $v_{1}$ and $v_{2}$, each of which is
perpendicular to one of the eccentrices of the ellipse.
\end{corollary}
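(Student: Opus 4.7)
The plan is to read this off as an immediate packaging of Theorem 5.4 together with the new ellipse definition and Theorem 5.3, with only a short uniqueness remark to justify the word "the" in "the eccentrices."

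First I would invoke Theorem 5.4 directly: it gives two linearly independent unit vectors $v_1,v_2$ perpendicular to the diagonals of the circumscribing axes-aligned rectangle such that, with $\lambda_1=\lambda_2=1$, the given ellipse coincides with the $(v_1,v_2)$-Euclidean circle centered at $C$ with radius $r=\sqrt{2}ab/\sqrt{a^2+b^2}$. Writing $l_i=\Psi_C^{v_i}$, the defining equation of that circle is $d_E(P,l_1)^2+d_E(P,l_2)^2=r^2=\frac{2a^2b^2}{a^2+b^2}$ for every point $P$ on the ellipse. By Definition 5.1 this is exactly the statement that $l_1$ and $l_2$ are eccentrices of the ellipse with constant $\frac{2a^2b^2}{a^2+b^2}$, and since $l_1,l_2$ pass through $C$ along the diagonals of the circumscribing rectangle, the geometric description of the eccentrices in the statement is established.

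Next, the identification of the constant with the squared distance between an eccentrix-intersection point of the ellipse and the other eccentrix is a one-line check: if $Q_1\in l_1\cap\text{(ellipse)}$, then $d_E(Q_1,l_1)=0$, so the eccentrix equation collapses to $d_E(Q_1,l_2)^2=\frac{2a^2b^2}{a^2+b^2}$, exactly as claimed; by symmetry the same holds for $Q_2\in l_2\cap\text{(ellipse)}$. The radius assertion requires nothing beyond what Theorem 5.4 already gave.

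The only subtlety — and what I expect to be the main (though still minor) obstacle — is justifying the definite article in "the eccentrices," i.e. uniqueness. I would argue this in two steps. By Theorem 5.3, if two intersecting lines $l_1',l_2'$ through $C$ serve as eccentrices of this ellipse (necessarily through the center, since the eccentrix construction produces a circle centered at $C$), then the non-obtuse angle $\theta'$ between them must satisfy $\tan(\theta'/2)=b/a$, which determines $\theta'$; and the axes of the ellipse must be the bisectors of $\theta'$. These two conditions pin down the unordered pair $\{l_1',l_2'\}$ uniquely, and that pair is precisely the pair of diagonals of the axes-aligned circumscribing rectangle, completing the proof.
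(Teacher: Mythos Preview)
Your proposal is correct and matches the paper's approach: the paper states this corollary without a separate proof, treating it as an immediate repackaging of Theorem~5.4, Definition~5.1, and the uniqueness remark preceding the definition, which is exactly what you do. One small slip: the relation $\tan(\theta'/2)=b/a$ and the fact that the axes bisect the eccentrices come from Theorems~5.2 and~5.1, not Theorem~5.3 (which is the conjugate-diameter statement).
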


\begin{corollary}
A $(v_{1},v_{2})$-Euclidean circle with center $C$ and radius $r$, is an
ellipse whose constant is $r^{2}$ and eccentrices are the lines through $C$
and perpendicular to $v_{1}$ and $v_{2}$. In addition, semi-major and
semi-minor axes of this ellipse are $a=\frac{r}{\sqrt{1-\cos \theta }}$ and 
$b=\frac{r}{\sqrt{1+\cos \theta }}$ where $\cos \theta =\left\vert
v_{11}v_{21}+v_{12}v_{22}\right\vert $.
\end{corollary}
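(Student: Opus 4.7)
The plan is to assemble this corollary directly from three previously established facts, since the real work has already been done in Theorem 5.1, Theorem 5.3 and Corollary 5.1. First I would unfold the definition: for $\lambda_1=\lambda_2=1$, the $(v_1,v_2)$-Euclidean circle with center $C$ and radius $r$ is, by Definition 2.1 together with the geometric interpretation in Remark 2.1, the locus of points $P$ satisfying
\[
(d_E(P,l_1))^2+(d_E(P,l_2))^2=r^2,
\]
where $l_1=\Psi_C^{v_1}$ and $l_2=\Psi_C^{v_2}$ are the lines through $C$ perpendicular to $v_1$ and $v_2$ respectively. Comparing this equation with the two-eccentrices Definition 5.1 immediately identifies the locus as the ellipse whose constant is $r^2$ and whose eccentrices are exactly $l_1$ and $l_2$; this gives the first assertion.

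Next I would extract the axes from Corollary 5.1 applied with $c=r^{2}$ and the intersecting eccentrices $l_1,l_2$. That corollary outputs semi-major axis $a=\sqrt{c}/\sqrt{1-\cos\theta}$ and semi-minor axis $b=\sqrt{c}/\sqrt{1+\cos\theta}$, where $\theta$ is the non-obtuse angle between $l_1$ and $l_2$. Substituting $\sqrt{c}=r$ produces the formulas in the statement, provided one checks that the angle between the two eccentrices coincides with the non-obtuse angle between $v_1$ and $v_2$. This is the one-line geometric observation that $l_i\perp v_i$ implies the pairs $(l_1,l_2)$ and $(v_1,v_2)$ have equal (or supplementary) angles, so their non-obtuse angles agree.

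It remains to rewrite $\cos\theta$ in coordinates. Since $v_1$ and $v_2$ are unit vectors, the standard inner product formula gives $\langle v_1,v_2\rangle=v_{11}v_{21}+v_{12}v_{22}=\pm\cos\theta$, and since $\theta$ is chosen non-obtuse, taking absolute values yields $\cos\theta=|v_{11}v_{21}+v_{12}v_{22}|$, closing the argument.

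I do not anticipate a genuine obstacle: the entire statement is a book-keeping repackaging of earlier results in the language of the two-eccentrices definition. The only place requiring mild care is the identification of the angle between the eccentrices $l_1,l_2$ with the angle between the normal vectors $v_1,v_2$, and the need to insert an absolute value when passing from the signed dot product to $\cos\theta$ for the non-obtuse choice of angle.
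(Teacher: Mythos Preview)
Your argument is correct and matches the paper's intended derivation: the corollary is stated without proof in the paper, as an immediate consequence of Definition 5.1 and Corollary 5.1 (which in turn rests on Theorems 5.1, 5.2 and 5.4), and that is exactly what you do. One small slip: in your opening sentence you cite Theorem 5.3, but that result (about conjugate diameters) plays no role here; the ingredients you actually use are Theorem 5.1, Theorem 5.2 (for the formulas $a=r/\sqrt{1-\cos\theta}$, $b=r/\sqrt{1+\cos\theta}$ and the identity $\cos\theta=|v_{11}v_{21}+v_{12}v_{22}|$), and Corollary 5.1.
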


\begin{remark}
Clearly, an ellipse can be determined uniquely by its axes, semi-major axis 
$2a$ and minor axis $2b$, or simply a rectangle with sides of lengths $2a$
and $2b$. Here, we see that it can also be determined uniquely by its
eccentrices with the angle between them and eccentric diameter $2R$, or
simply a rhombus with sides of length $2R$; having the relation $\sqrt{2}R=\sqrt{a^{2}+b^{2}}$. 
While diagonals of the rectangle (whose length is equal
to $2\sqrt{2}R$) give eccentrices of the ellipse, diagonals of the rhombus
(whose lengths are equal to $2\sqrt{2}a$ and $2\sqrt{2}b$) give axes of the
ellipse (see Figure 17). Obviously, when such a rectangle is given, one can
construct the related rhombus, and vice versa. So, for an ellipse whose
center and four points of tangency to its rectangle are known, one can
construct four points on the eccentrices of the ellipse: they are
intersection points of diagonals of the rectangle and sides of the rhombus.
Similarly, for an ellipse whose center and four points of tangency to its
rhombus are known, one can construct four more points on the ellipse: they
are intersection points of diagonals of the rhombus and sides of the
rectangle (see also \cite{Mccartin2} and \cite{Horvath} for construction of
an ellipse from a pair of conjugate diameters).
\end{remark}

\begin{center}
\includegraphics[width=2.5 in]{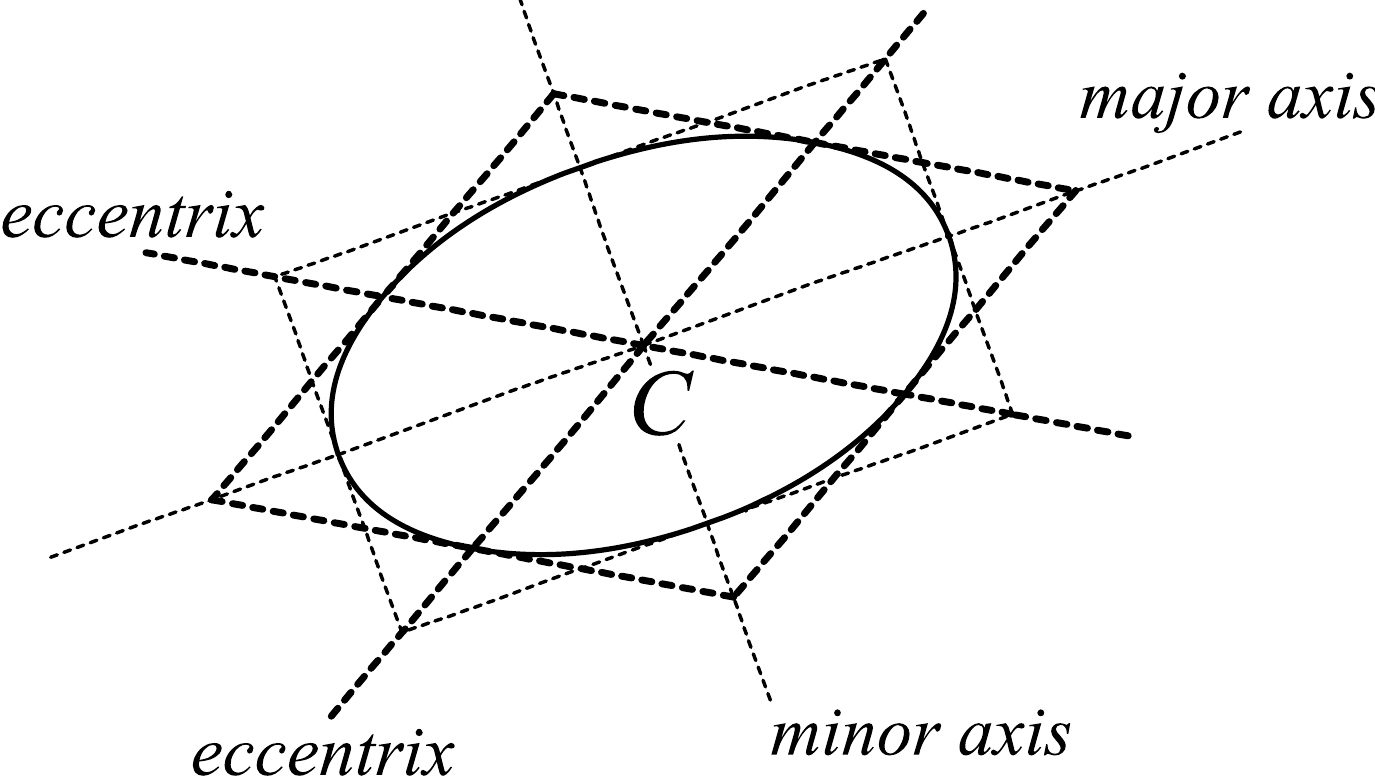}
\end{center}
\begin{center}
\textbf{Figure 17.} Axes and eccentrices of an ellipse.
\end{center}

\begin{remark}
Clearly, the eccentrices of ellipse with the equation%
\begin{equation*}
\frac{x^{2}}{a^{2}}+\frac{y^{2}}{b^{2}}=1
\end{equation*}
and the asymptotes of conjugate hyperbolas with the equations%
\begin{equation*}
\frac{x^{2}}{a^{2}}-\frac{y^{2}}{b^{2}}=1\text{ \ and\ \ }-\frac{x^{2}}{a^{2}}+\frac{y^{2}}{b^{2}}=1
\end{equation*}
are the same (see Figure 18).
\end{remark}

\begin{center}
\includegraphics[width=3.3 in]{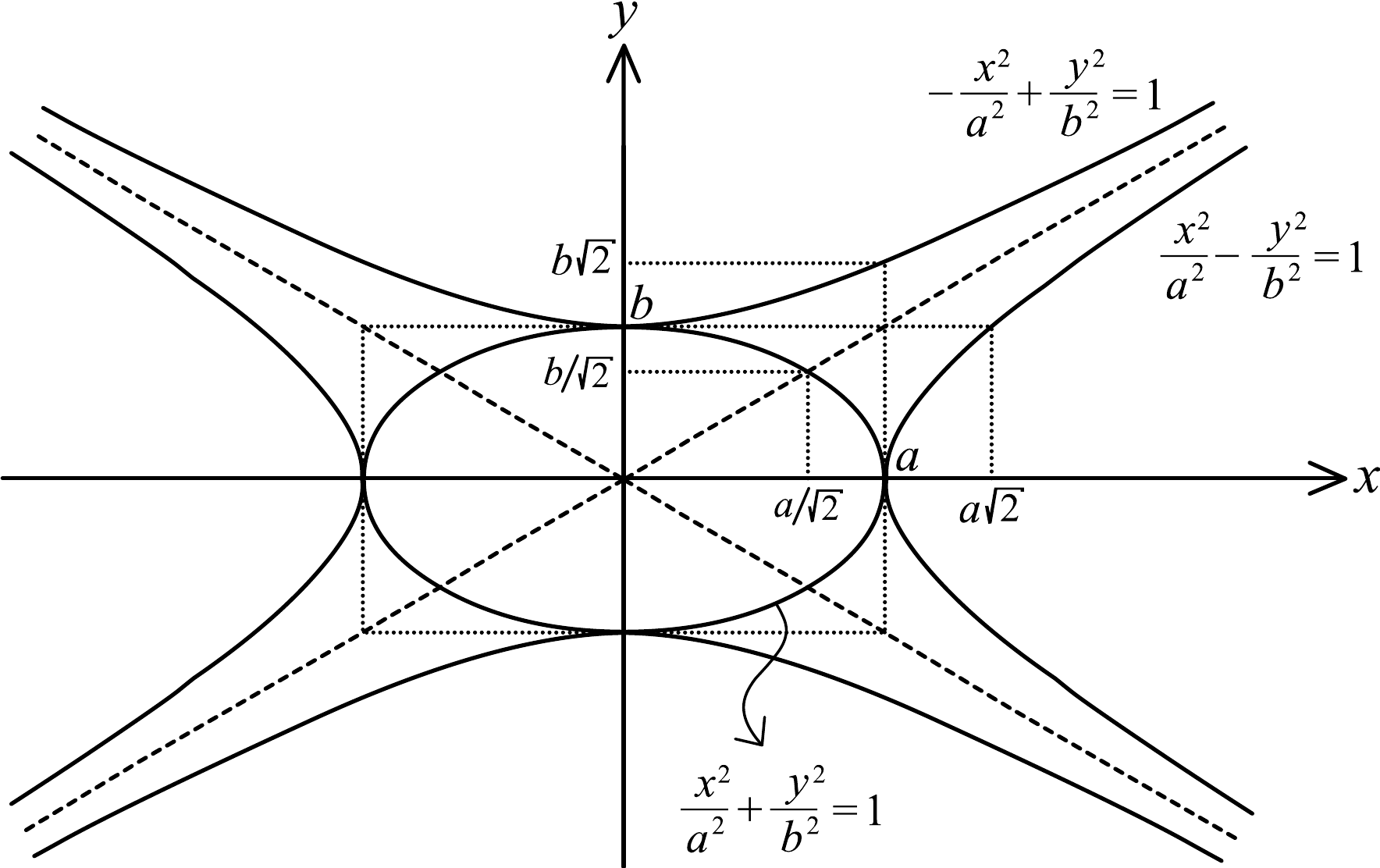} \vspace{-0.06in}
\end{center}
\begin{center}
\textbf{Figure 18.} Conics determined by the same semi-axes $a$ and $b$.
\end{center}

\noindent One can naturally wonder the answer of the following question: So,
what is the set of all points for each of which difference of squares of its
distances to two intersecting fixed lines is constant, and is it a
definition for hyperbola? It can easily be seen that for any two
intersecting fixed lines this set determines a hyperbola having
perpendicular asymptotes: Consider the same lines $l_{i}:v_{i1}x+v_{i2}y=0$
for $i=1,2$, and the set of points satisfying the equation 
\begin{equation}
(d_{E}(P,l_{1}))^{2}-(d_{E}(P,l_{2}))^{2}=k
\end{equation}%
for $k\in \mathbb{R}$-$\{0\}$, which gives the equation
\begin{equation}
Ax^{2}+By^{2}+2Cxy+2Dx+2Ey+F=0
\end{equation}%
where $A=(v_{11}^{2}-v_{21}^{2})$, $B=(v_{12}^{2}-v_{22}^{2})$, 
$C=(v_{11}v_{12}-v_{21}v_{22})$, $D=E=0$ and $F=-k$. Since $\delta <0$ and 
$\Delta \neq 0$, this equation determines a hyperbola 
(see \cite[pp. 232-233]{Zwil}). Moreover, by the theorem given in \cite{Venit}, if $v_{12}\neq
v_{22}$ then slopes $m_{1}$ and $m_{2}$ of the asymptotes are the distinct
real roots of the quadratic equation
\begin{equation}
(v_{12}^{2}-v_{22}^{2})m^{2}+2(v_{11}v_{12}-v_{21}v_{22})m+(v_{11}^{2}-v_{21}^{2})=0%
\text{,}
\end{equation}
so the asymptotes are perpendicular since the multiplication of the roots is
-$1$, and if $v_{12}=v_{22}$ then the hyperbola has perpendicular asymptotes
one of them is vertical the other one is horizontal. On the other hand, if
we use absolute value for the difference notion in the question, then
clearly we get two conjugate hyperbolas having the same perpendicular
asymptotes. So, this set does not give a definition for hyperbola 
(see \cite{Mitchell} for the hyperbolas determined by two fixed lines using the
distances of a point to the fixed lines). \smallskip

Notice that, in the section 3 and 4 one can give definitions for rectangle
and rhombus using two distinct lines and determine some properties of them,
in a similar way.\smallskip

\end{document}